\documentclass[10pt]{article}
\usepackage[utf8]{inputenc}
\usepackage[T1]{fontenc}
\usepackage{amsmath}
\usepackage{amsfonts}
\usepackage{amssymb}
\usepackage[version=4]{mhchem}
\usepackage{stmaryrd}
\usepackage{bbold}
\usepackage{hyperref}
\hypersetup{colorlinks=true, linkcolor=blue, filecolor=magenta, urlcolor=cyan,}
\urlstyle{same}
\usepackage{amsthm}

\newtheorem{theorem}{Theorem}[section]
\newtheorem{lemma}[theorem]{Lemma}
\newtheorem{proposition}[theorem]{Proposition}
\newtheorem{corollary}[theorem]{Corollary}


\newtheorem{definition}{Definition}[section]

\theoremstyle{remark}

\title{Homogeneous Groups and Covers }

\author{Eric Primozic}
\date{April 2014}

\begin{document}
\maketitle

\section*{Introduction}
Given an $n$-dimensional vector space $V$ over a field $F$ and ordered bases $$\left(x_{1}, \ldots, x_{n}\right),\left(y_{1}, \ldots, y_{n}\right)$$ of $V$, there exists a unique isomorphism $f$ of $V$ such that $f\left(x_{i}\right)=y_{i}$ for all $i \leq n$. It is natural to ask how this property generalizes to generating sequences of a finite group with vector space isomorphisms replaced by group isomorphisms and ordered bases replaced by irredundant generating sequences of a particular length.

\begin{definition} We say that a sequence of elements $\left(x_{1}, \ldots, x_{n}\right)$ in a finite group $G$ is a generating sequence of $G$ if $\left\langle x_{1}, \ldots, x_{n}\right\rangle=G$.
\end{definition}

\begin{definition} A generating sequence $\left(x_{1}, \ldots, x_{n}\right)$ of a finite group $G$ is called irredundant if no proper subsequence of $\left(x_{1}, \ldots, x_{n}\right)$ also generates $G$.
\end{definition}

It is clear that the vector space property from the previous paragraph does not generalize to all finite groups. For example, $G=Z_{2} \times Z_{3}$, where $Z_{n}$ denotes the cyclic group of order $n$ written multiplicatively, does not satisfy this property since $\langle(a, 1),(1, b)\rangle=G$, where $\langle a\rangle=Z_{2},\langle b\rangle=Z_{3}$, and there is no isomorphism of $G$ mapping $a$ to $b$.

Finite groups satisfying the above-mentioned vector space property for generating sequences of length $n$ are called homogeneous groups of rank $n$. These groups were first studied by Gaschütz and the Neumanns. For every finite group $G$ that can be generated by $n$ elements, one can associate a homogeneous group $H(n, G)$, called the homogeneous cover of $G$ of rank $n$, having nice properties in relation to $G$ and satisfying certain universal mapping properties. It is hoped that homogeneous covers of finite groups can be used to develop a $K$-theory for finite groups.

In this thesis, we give a survey of results on homogeneous groups and covers. From previously made computations of specific homogeneous covers, we discuss more general results and theorems that arise from these computations.

\section{Basic Properties of Generating Sequences}
The primary objects in our study will be generating sequences of finite groups.

\begin{definition} For a finite group $G$, let $\Gamma_{n}(G)$ denote the set of all length $n$ generating sequences of $G$.
\end{definition}

\begin{definition} For a finite group $G$, let $r(G)$ denote the smallest integer $n$ for which $\Gamma_{n}(G) \neq \emptyset$.
\end{definition}

From our discussion of the vector space property in the introduction, it is important to study how the automorphisms of a finite group $G$ act on $\Gamma_{n}(G)$ for all positive integers $n$. Such an action should indicate how a group might not satisfy the vector space property. We define the action of $\operatorname{Aut}(G)$ on $\Gamma_{n}(G)$ by $f\left(x_{1}, \ldots, x_{n}\right)=\left(f\left(x_{1}\right), \ldots, f\left(x_{n}\right)\right)$ for $f \in \operatorname{Aut}(G)$ and $\left(x_{1}, \ldots, x_{n}\right) \in \Gamma_{n}(G)$. From the group structure of $\operatorname{Aut}(G)$, it is clear that the required properties for these relations to define an action are satisfied. The vector space property for a particular integer $n$ is then equivalent to the action of $\operatorname{Aut}(G)$ on $\Gamma_{n}(G)$ being transitive. For the next proposition, we assume $G$ can be generated by $n$ elements.

\begin{proposition} Aut $(G)$ acts freely on $\Gamma_{n}(G)$.
\end{proposition}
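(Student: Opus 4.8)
The plan is to verify that the stabilizer in $\operatorname{Aut}(G)$ of an arbitrary generating sequence $(x_1,\dots,x_n)\in\Gamma_n(G)$ is the trivial subgroup; since that is exactly the defining condition for the action to be free, this suffices. So I would fix $f\in\operatorname{Aut}(G)$ with $f(x_1,\dots,x_n)=(x_1,\dots,x_n)$, which by the definition of the action means $f(x_i)=x_i$ for every $i\le n$, and aim to conclude $f=\operatorname{id}_G$.

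The key step is the standard fact that a homomorphism out of $G$ is determined by its restriction to a generating set. Concretely, since $\langle x_1,\dots,x_n\rangle=G$, every $g\in G$ admits an expression $g=x_{i_1}^{\varepsilon_1}\cdots x_{i_k}^{\varepsilon_k}$ with each $\varepsilon_j\in\{\pm1\}$. Applying the homomorphism $f$ and using $f(x_i)=x_i$ yields
$$f(g)=f(x_{i_1})^{\varepsilon_1}\cdots f(x_{i_k})^{\varepsilon_k}=x_{i_1}^{\varepsilon_1}\cdots x_{i_k}^{\varepsilon_k}=g .$$
Hence $f$ fixes $G$ pointwise, i.e. $f=\operatorname{id}_G$. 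As $(x_1,\dots,x_n)$ was arbitrary, every point stabilizer is trivial, so the action is free.

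There is essentially no serious obstacle here; the argument is a one-line application of the universal-mapping idea behind generation. The only points to be careful about are the convention that "acts freely" means all point stabilizers are trivial (equivalently, no non-identity automorphism fixes a generating sequence), and the observation that we genuinely use that the sequence \emph{generates} $G$, not merely that its entries lie in $G$. One may equivalently phrase the proof by saying that for each fixed $(x_1,\dots,x_n)$ the orbit map $\operatorname{Aut}(G)\to\Gamma_n(G)$, $f\mapsto f(x_1,\dots,x_n)$, is injective, because two automorphisms that agree on $\{x_1,\dots,x_n\}$ are equal; injectivity of all orbit maps is precisely freeness.
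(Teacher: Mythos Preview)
Your argument is correct: showing that any automorphism fixing each $x_i$ must fix every word in the $x_i$, hence all of $G$, is exactly the standard verification that point stabilizers are trivial. The paper itself states this proposition without proof, so there is nothing further to compare; your write-up supplies precisely the expected justification.
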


\begin{corollary} With respect to the action of $\operatorname{Aut}(G), \Gamma_{n}(G)$ is the disjoint union of $\left|\Gamma_{n}(G)\right| /|\operatorname{Aut}(G)|$ orbits. Each orbit contains $|\operatorname{Aut}(G)|$ elements.
\end{corollary}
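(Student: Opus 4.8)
The plan is to reduce the statement to the standard fact that a homomorphism out of $G$ is determined by its values on any generating set. Recall that an action is free precisely when every point stabilizer is trivial, so it suffices to fix an arbitrary sequence $(x_1,\ldots,x_n)\in\Gamma_n(G)$ and an automorphism $f\in\operatorname{Aut}(G)$ with $f\cdot(x_1,\ldots,x_n)=(x_1,\ldots,x_n)$, and then conclude $f=\mathrm{id}_G$. Note first that the hypothesis that $G$ can be generated by $n$ elements guarantees $\Gamma_n(G)\neq\emptyset$, so the claim is not vacuous, and that $\operatorname{Aut}(G)$ genuinely acts on $\Gamma_n(G)$ because an automorphism sends a generating sequence to a generating sequence.

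Next I would unpack the definition of the action: $f\cdot(x_1,\ldots,x_n)=(f(x_1),\ldots,f(x_n))$, so the fixed-point hypothesis says exactly that $f(x_i)=x_i$ for every $i\le n$. Since $(x_1,\ldots,x_n)$ is a generating sequence, $\langle x_1,\ldots,x_n\rangle=G$, so every $g\in G$ can be written as a finite product of the $x_i$ and their inverses. Because $f$ is a homomorphism it commutes with the formation of products and inverses; applying $f$ to such an expression for $g$ and substituting $f(x_i)=x_i$ then gives $f(g)=g$. As $g\in G$ was arbitrary, $f=\mathrm{id}_G$, so the stabilizer of $(x_1,\ldots,x_n)$ is trivial. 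Since $(x_1,\ldots,x_n)\in\Gamma_n(G)$ was arbitrary, the action is free.

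There is essentially no serious obstacle here; the only point worth stating with a little care is the substitution step, namely that a homomorphism fixing each element of a generating set fixes the whole group. One could phrase this cleanly by induction on word length, or simply invoke the uniqueness clause in the universal property of generation. The corollary then follows immediately from the orbit–stabilizer theorem: since every orbit has size $|\operatorname{Aut}(G)|$ and the orbits partition $\Gamma_n(G)$, there are $|\Gamma_n(G)|/|\operatorname{Aut}(G)|$ of them.
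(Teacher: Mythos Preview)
Your argument is correct and matches the paper's intended approach: the corollary is stated immediately after (and as a consequence of) the proposition that $\operatorname{Aut}(G)$ acts freely on $\Gamma_n(G)$, and you supply exactly that freeness argument followed by orbit--stabilizer. The paper itself gives no proof for either the proposition or the corollary, so your write-up simply fills in the omitted details in the expected way.
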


\begin{definition} Let $h_{n}(G)=\left|\Gamma_{n}(G)\right| /|\operatorname{Aut}(G)|$.
\end{definition}

\begin{proposition} A finite group $G$ is homogeneous of rank $n$ if and only if $h_{n}(G)=1$.
\end{proposition}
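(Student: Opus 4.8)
The plan is to reduce everything to counting orbits, leaning on the two results just established. Since $G$ is assumed to be generated by $n$ elements, $\Gamma_{n}(G) \neq \emptyset$, and by the preceding Proposition the action of $\operatorname{Aut}(G)$ on $\Gamma_{n}(G)$ is free; hence by the Corollary this action partitions $\Gamma_{n}(G)$ into exactly $|\Gamma_{n}(G)| / |\operatorname{Aut}(G)| = h_{n}(G)$ orbits, each of cardinality $|\operatorname{Aut}(G)|$. So $h_{n}(G)$ is literally the number of $\operatorname{Aut}(G)$-orbits on $\Gamma_{n}(G)$.

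Next I would unpack the definition of homogeneity using the reformulation already noted in the text: $G$ is homogeneous of rank $n$ precisely when $\operatorname{Aut}(G)$ acts transitively on $\Gamma_{n}(G)$, i.e. when any length $n$ generating sequence can be carried to any other by an automorphism (freeness then making that automorphism unique, matching the vector space picture). For a nonempty $\operatorname{Aut}(G)$-set, transitivity is equivalent to there being a single orbit.

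It then remains only to combine these two observations. For the forward direction, if $G$ is homogeneous of rank $n$ then $\Gamma_{n}(G)$ is a single orbit, so the orbit count $h_{n}(G)$ equals $1$. Conversely, if $h_{n}(G) = 1$ then by the orbit decomposition above $\Gamma_{n}(G)$ consists of exactly one orbit, so $\operatorname{Aut}(G)$ acts transitively on $\Gamma_{n}(G)$ and $G$ is homogeneous of rank $n$.

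I do not anticipate any genuine obstacle here; the proposition is essentially a repackaging of the Corollary together with the reformulation of the vector space property in terms of transitivity. The one point worth a line of care is the degenerate case $\Gamma_{n}(G) = \emptyset$ (equivalently $h_{n}(G) = 0$), which is excluded by the standing hypothesis that $G$ can be generated by $n$ elements, so that $h_{n}(G)$ genuinely counts a nonempty family of equal-sized orbits and the equivalence with ``$h_{n}(G) = 1$'' holds exactly.
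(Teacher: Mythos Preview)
Your proposal is correct and matches the paper's treatment: the paper states this proposition without proof, treating it as immediate from the preceding Corollary (orbit count equals $h_n(G)$) together with the earlier remark that homogeneity of rank $n$ is equivalent to transitivity of the $\operatorname{Aut}(G)$-action on $\Gamma_n(G)$. Your write-up simply makes explicit the one-line argument the paper leaves implicit, and your remark about the nonemptiness hypothesis is appropriate.
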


Of course, it is true that a homogeneous group $G$ of $\operatorname{rank} n$ indeed satisfies $r(G)=n$.

\begin{proposition} Let $G$ be a nontrivial finite group. If $G$ is a homogeneous group of rank $n$, then $r(G)=n$.
\end{proposition}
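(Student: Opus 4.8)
The plan is to prove the two inequalities $r(G)\le n$ and $r(G)\ge n$ separately. For the first, I would use the preceding proposition: being homogeneous of rank $n$ is equivalent to $h_n(G)=1$, hence $|\Gamma_n(G)|=|\operatorname{Aut}(G)|>0$, so $\Gamma_n(G)\neq\emptyset$ and $r(G)\le n$ follows immediately from the definition of $r$. The real content is therefore the reverse inequality.

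To show $r(G)\ge n$ I would argue by contradiction, assuming $m:=r(G)<n$. First I would note $m\ge 1$, since the empty sequence generates only the trivial group and $G$ is nontrivial; thus $n\ge 2$ and $n-m\ge 1$. Then I would fix a generating sequence $(x_1,\dots,x_m)\in\Gamma_m(G)$, observe that it is irredundant by minimality of $m$ (a proper subsequence generating $G$ would have length $<r(G)$), and deduce in particular $x_1\neq 1$, for otherwise $(x_2,\dots,x_m)$ would already generate $G$.

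The key step is to exhibit two length-$n$ generating sequences lying in distinct $\operatorname{Aut}(G)$-orbits. I would take
$$ s=(x_1,\dots,x_m,\underbrace{1,\dots,1}_{n-m}) \quad\text{and}\quad s'=(1,x_1,\dots,x_m,\underbrace{1,\dots,1}_{n-m-1}), $$
both of length $n$, and both generating $G$ because each contains the subsequence $(x_1,\dots,x_m)$, so $s,s'\in\Gamma_n(G)$. Since every $f\in\operatorname{Aut}(G)$ satisfies $f(1)=1$, the action sends any sequence to one having the identity in exactly the same coordinates; as $s$ has $x_1\neq 1$ in its first coordinate while $s'$ has $1$ there, no automorphism carries $s'$ to $s$. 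Hence $\Gamma_n(G)$ contains at least two orbits, so $h_n(G)\ge 2$, contradicting $h_n(G)=1$. Therefore $r(G)=n$.

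I do not anticipate a genuine obstacle here: the argument is essentially bookkeeping, and the only places the hypotheses are used are in checking $\Gamma_n(G)\neq\emptyset$ and $x_1\neq 1$, both of which rely on nontriviality of $G$ together with the minimality of $m$. The conceptual heart is simply that ``the set of coordinates equal to the identity'' is an invariant of the $\operatorname{Aut}(G)$-action on $\Gamma_n(G)$, which forces any padded-out sequence to obstruct transitivity whenever $n$ exceeds the true generating number.
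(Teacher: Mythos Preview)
Your proof is correct and follows essentially the same approach as the paper: both argue by contradiction from $n>r(G)$, pad a minimal generating sequence with identity elements, and use that automorphisms fix $1$ to conclude that some pair of sequences in $\Gamma_n(G)$ cannot lie in the same orbit. The only cosmetic difference is that the paper compares a padded sequence (ending in $1$) against an arbitrary $x\in\Gamma_n(G)$ arranged to have $x_n\neq 1$, whereas you compare two differently-shifted paddings of the same minimal sequence; your version is slightly more explicit about the inequality $r(G)\le n$ and about why the distinguished coordinate is nontrivial.
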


\begin{proof} Suppose that $n>r(G)$. There exists $x=\left(x_{1}, \ldots, x_{n}\right) \in \Gamma_{n}(G)$ and $y=\left(y_{1}, \ldots, y_{r(G)}\right) \in \Gamma_{r(G)}(G)$. We may assume that $x_{n} \neq 1$. Note that $w=\left(y_{1}, \ldots, y_{r(G)}, z\right) \in \Gamma_{n}(G)$ where $z=(1, \ldots, 1)$ is the length $n-r(G)$ element of $G^{n-r(G)}$ consisting of ones. As $G$ is homogeneous of rank $n$, there exists $f \in \operatorname{Aut}(G)$ such that $f(x)=w$ which is impossible since such an automorphism $f$ would map the nontrivial element $x_{n}$ to 1 .
\end{proof}

For any prime $p, h_{n}\left(Z_{p}^{n}\right)=1$ since any group automorphism of $Z_{p}^{n}$ is also a vector space isomorphism. $h_{n}(G)$ can be computed more generally using GAP [3]. For the example in the introduction, $h_{2}\left(Z_{2} \times Z_{3}\right)=12 \neq 1$ as one would expect since $Z_{2} \times Z_{3}$ does not satisfy the vector space property described in the introduction. However, we note that $h_{2}\left(Z_{2}^{2} \times Z_{3}^{2}\right)=1$. We will later establish general principles that allow us to compute $h_{n}(G)$ for an infinite number of finite groups $G$ and certain integers $n$.

\section{Free Groups and Generating Sequences}
Throughout this thesis, we let $F_{n}$ denote the free group on $n$ generators $a_{1}, \ldots, a_{n}$. For a finite group $G$ and $s=\left(x_{1}, \ldots, x_{n}\right) \in \Gamma_{n}(G)$, there exists a homomorphism $f: F_{n} \rightarrow G$ defined by $f\left(a_{i}\right)=x_{i}$ for $i \leq n$.

\begin{definition} Let $G$ be a finite group such that $n \geq r(G)$. Given $s=$ $\left(x_{1}, \ldots, x_{n}\right) \in \Gamma_{n}(G)$, we let $\pi_{s}: F_{n} \rightarrow G$ be the unique homomorphism from $F_{n}$ to $G$ such that $\pi_{s}\left(a_{i}\right)=x_{i}$ for $i \leq n$. We also let $K_{s}=\operatorname{ker} \pi_{s}$ for $s \in \Gamma_{n}(G)$.
\end{definition}

As the following proposition shows, the kernels of surjective homomorphisms from $F_{n}$ to a finite group $G$ are closely related to the action of $\operatorname{Aut}(G)$ on $\Gamma_{n}(G)$.

\begin{proposition} Let $G$ be a finite group such that $n \geq r(G)$. Let $s_{1}, s_{2} \in$ $\Gamma_{n}(G)$. Then $s_{1}$ and $s_{2}$ are equivalent under the action of $\operatorname{Aut}(G)$ on $\Gamma_{n}(G)$ if and only if $K_{s_{1}}=K_{s_{2}}$.
\end{proposition}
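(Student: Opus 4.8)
The plan is to translate equivalence under $\operatorname{Aut}(G)$ into a statement about the associated homomorphisms $\pi_{s_1},\pi_{s_2}\colon F_n\to G$ and then invoke the first isomorphism theorem. The key observation making everything work is that since $s_i\in\Gamma_n(G)$, each $\pi_{s_i}$ is surjective, so $F_n/K_{s_i}\cong G$ canonically.

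For the forward direction, I would suppose $s_2=f(s_1)$ for some $f\in\operatorname{Aut}(G)$, writing $s_1=(x_1,\ldots,x_n)$ so that $s_2=(f(x_1),\ldots,f(x_n))$. Then $f\circ\pi_{s_1}$ is a homomorphism $F_n\to G$ carrying $a_i\mapsto f(x_i)$, and by the uniqueness clause in the universal property of $F_n$ this must equal $\pi_{s_2}$. Since $f$ is injective, $K_{s_2}=\ker\pi_{s_2}=\ker(f\circ\pi_{s_1})=\ker\pi_{s_1}=K_{s_1}$.

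For the converse, I would suppose $K_{s_1}=K_{s_2}=:K$. By the first isomorphism theorem together with the surjectivity of each $\pi_{s_i}$, there are induced isomorphisms $\overline{\pi_{s_i}}\colon F_n/K\xrightarrow{\ \sim\ }G$ with $\overline{\pi_{s_i}}\circ q=\pi_{s_i}$, where $q\colon F_n\to F_n/K$ is the quotient map. Setting $f=\overline{\pi_{s_2}}\circ\overline{\pi_{s_1}}^{-1}\in\operatorname{Aut}(G)$ and writing $s_1=(x_1,\ldots,x_n)$, for each $i$ one computes $f(x_i)=\overline{\pi_{s_2}}\bigl(\overline{\pi_{s_1}}^{-1}(x_i)\bigr)=\overline{\pi_{s_2}}\bigl(q(a_i)\bigr)=\pi_{s_2}(a_i)$, which is exactly the $i$-th entry of $s_2$. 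Hence $f(s_1)=s_2$, so $s_1$ and $s_2$ lie in the same $\operatorname{Aut}(G)$-orbit.

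There is no substantial obstacle here; the only points needing care are that $\pi_{s_i}$ is surjective \emph{precisely because} $s_i$ generates $G$ (which is what upgrades the induced maps on $F_n/K$ from surjections to isomorphisms), and that it is the uniqueness part of the free-group universal property — not merely its existence part — that forces $f\circ\pi_{s_1}=\pi_{s_2}$ in the forward direction.
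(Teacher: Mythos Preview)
Your proof is correct and follows essentially the same approach as the paper: both directions use that $\pi_{s_2}=f\circ\pi_{s_1}$ in the forward direction, and in the converse both construct the automorphism as $\overline{\pi_{s_2}}\circ\overline{\pi_{s_1}}^{-1}$ via the induced isomorphisms $F_n/K\to G$. Your write-up is in fact slightly more detailed than the paper's (you make explicit the appeal to uniqueness in the universal property and the verification that $f(x_i)$ is the $i$th entry of $s_2$), but the argument is the same.
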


\begin{proof} Suppose $s_{1}$ and $s_{2}$ are equivalent. Then there exists $f \in \operatorname{Aut}(G)$ such that $\pi_{s_{2}}=f \circ \pi_{s_{1}}$. It follows that $K_{s_{1}}=K_{s_{2}}$ since $f$ is an automorphism.

Conversely, suppose $K_{s_{1}}=K_{s_{2}}$. For $i=1$ and $i=2$, there exists $\overline{\pi_{s_{i}}}: F_{n} / K_{1} \rightarrow G$ such that $\pi_{s_{i}}=\overline{\pi_{s_{i}}} \circ \pi$ where $\pi: F_{n} \rightarrow F_{n} / K_{1}$ is the quotient map. Then $h=\overline{\pi_{s_{2}}} \circ \overline{\pi_{s_{1}}}-1$ is an automorphism of $G$ satisfying $h\left(s_{1}\right)=s_{2}$.
\end{proof}

\begin{corollary} There is a one-to-one correspondence between orbits of $\Gamma_{n}(G)$ under the action of $\operatorname{Aut}(G)$ and kernels of surjective homomorphisms $F_{n} \rightarrow$ $G$.
\end{corollary}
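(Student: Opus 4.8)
The plan is to read the bijection straight out of the preceding proposition, which already decides exactly when two generating sequences produce the same kernel. First I would define a map $\Phi$ sending an $\operatorname{Aut}(G)$-orbit in $\Gamma_n(G)$ to a kernel: pick a representative $s$ of the orbit and set $\Phi(\text{orbit of }s) = K_s$. Two verifications are needed at this stage. One is that $\Phi$ is well defined, which is immediate from the proposition: if $s_1$ and $s_2$ lie in the same orbit then $K_{s_1}=K_{s_2}$, so the value is independent of the choice of representative. The other is that $K_s$ genuinely is the kernel of a \emph{surjective} homomorphism; but $\pi_s$ is surjective precisely because $s\in\Gamma_n(G)$ means $\langle x_1,\dots,x_n\rangle = G$.

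Next I would check that $\Phi$ is injective. If the orbits of $s_1$ and $s_2$ have the same image, then $K_{s_1}=K_{s_2}$, and the converse half of the proposition supplies $f\in\operatorname{Aut}(G)$ with $f(s_1)=s_2$; hence $s_1$ and $s_2$ lie in a single orbit. For surjectivity, I would take an arbitrary surjective homomorphism $\varphi\colon F_n\to G$ and form $s=(\varphi(a_1),\dots,\varphi(a_n))$. Because $\varphi$ is onto and $a_1,\dots,a_n$ generate $F_n$, the tuple $s$ generates $G$, so $s\in\Gamma_n(G)$; by the uniqueness clause in the definition of $\pi_s$ we get $\pi_s=\varphi$, whence $\ker\varphi = K_s = \Phi(\text{orbit of }s)$.

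I do not expect a real obstacle here: all of the substance is already contained in the proposition. The only points that need a little care are the routine ones — confirming that $\Phi$ does not depend on the chosen orbit representative, and observing that every surjective homomorphism out of $F_n$ arises as some $\pi_s$ for an honest generating sequence $s$, rather than merely for an arbitrary $n$-tuple of elements of $G$.
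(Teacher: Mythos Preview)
Your argument is correct and follows exactly the intended route: the paper states this as an immediate corollary of Proposition~2.1 with no separate proof, and your write-up simply unpacks that implication by defining the map on orbits, checking well-definedness and injectivity via the two directions of the proposition, and noting that every surjection $F_n\to G$ equals some $\pi_s$.
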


Proposition 2.1 is particularly useful in describing finite subdirect products of groups. The following proposition will be used in our discussion of homogeneous covers.

\begin{proposition} Let $n \in \mathbb{N}$ and let $G_{1}, \ldots, G_{m}$ be finite groups that can be generated by $n$ elements. Then any subdirect product $G \leq G_{1} \times \cdots \times G_{m}$, for which $r(G) \leq n$, is isomorphic to $F_{n} / K$ where $K$ can be written as an intersection of kernels of surjective homomorphisms $f_{i}: F_{n} \rightarrow G_{i}$ for $i \leq n$.
\end{proposition}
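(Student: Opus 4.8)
The plan is to produce the surjection $F_n \to G$ from a shortest generating sequence of $G$ and then compose it with the coordinate projections of the ambient product. Since $r(G) \le n$, pick a generating sequence of length $r(G)$ and pad it with identity elements to obtain $s = (x_1,\dots,x_n) \in \Gamma_n(G)$; then $\pi_s : F_n \to G$ is surjective with kernel $K := K_s$, and by the first isomorphism theorem $G \cong F_n/K$. This already gives the ``$\cong F_n/K$'' half of the statement, so the whole task reduces to exhibiting $K$ as an intersection of kernels of surjections $F_n \to G_i$.

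For each $i$ let $p_i : G_1 \times \cdots \times G_m \to G_i$ denote the $i$-th coordinate projection, and set $f_i := p_i \circ \pi_s$, where $\pi_s$ is regarded as a map into $G \le G_1 \times \cdots \times G_m$. Because $G$ is a \emph{subdirect} product, $p_i$ maps $G$ onto $G_i$; and $\pi_s$ maps $F_n$ onto $G$; hence each $f_i : F_n \to G_i$ is a surjective homomorphism. (Incidentally this shows $r(G_i) \le n$, so the hypothesis that every $G_i$ be $n$-generated is automatic once $G$ is.)

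It remains to verify $K = \bigcap_{i} \ker f_i$. The only substantive point is that $\bigcap_{i} \ker(p_i|_G) = \{1\}$: an element of $G$ that lies in every coordinate kernel has all of its coordinates trivial, so it is the identity of $G_1 \times \cdots \times G_m$. Since preimages under a homomorphism commute with intersections,
\[
\bigcap_{i} \ker f_i = \bigcap_{i} \pi_s^{-1}\!\big(\ker(p_i|_G)\big) = \pi_s^{-1}\!\Big(\bigcap_{i}\ker(p_i|_G)\Big) = \pi_s^{-1}(\{1\}) = \ker \pi_s = K ,
\]
which completes the argument.

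I do not expect a genuine obstacle: the proof is essentially a diagram chase. The points that need care are invoking the subdirect-product hypothesis to get surjectivity of each $f_i$ (as opposed to merely a map onto some subgroup or quotient of $G_i$), and the interchange of $\bigcap$ with $\pi_s^{-1}$ in the last display, which is precisely what pins $K$ down as the stated intersection. Alternatively one could route the bookkeeping through Corollary 2.2, identifying each $\ker f_i$ with an $\operatorname{Aut}(G_i)$-orbit of generating sequences, but the direct computation above seems cleanest.
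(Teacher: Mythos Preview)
Your argument is correct and follows essentially the same route as the paper: take a surjection $F_n\to G$ from an $n$-element generating sequence, compose with the coordinate projections $p_i$ to obtain the surjections $f_i:F_n\to G_i$, and then identify $\ker\pi_s$ with $\bigcap_i\ker f_i$. The paper compresses your final display into the single observation that $\pi_s(x)=(f_1(x),\dots,f_m(x))$, but the content is identical.
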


\begin{proof} As $G$ can be generated by $n$ elements, there exists a surjective homomorphism $f: F_{n} \rightarrow G$. Let $p_{i}: G \rightarrow G_{i}$ be the ith coordinate map for $i \leq n$. Then $f_{i}=p_{i} \circ f$ is surjective for $i \leq n$. We then observe that $f(x)=\left(f_{1}(x), \ldots, f_{m}(x)\right)$ for $x \in F_{n}$. It follows that $\operatorname{ker} f=\bigcap_{i} \operatorname{ker} f_{i}$.
\end{proof}
Here, we state a theorem of Gaschütz [4] that will be needed for later proofs.

\begin{theorem}[Gaschütz's Lemma] Let $G$ be a finite group and let $f$ : $G \rightarrow H$ be a surjective homomorphism. Suppose $n \geq r(G)$. Then for $s=$ $\left(s_{1}, \ldots, s_{n}\right) \in \Gamma_{n}(H)$, there exists $t=\left(t_{1}, \ldots, t_{n}\right) \in \Gamma_{n}(G)$ such that $f\left(t_{i}\right)=$ $s_{i}$ for $i \leq n$.
\end{theorem}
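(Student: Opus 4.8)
The plan is to reduce to the case where $\ker f$ is a minimal normal subgroup of $G$ and then run a counting argument. Write $N = \ker f$, so that $H \cong G/N$; I would induct on $|N|$, the case $N = 1$ being trivial. For the inductive step, choose a minimal normal subgroup $M$ of $G$ with $M \le N$ (possible since $N \neq 1$) and factor $f$ as $G \twoheadrightarrow G/M \twoheadrightarrow H$. The second map has kernel $N/M$ of order $|N|/|M| < |N|$, and $r(G/M) \le r(G) \le n$ since $G/M$ is a quotient of $G$, so by the induction hypothesis every $s \in \Gamma_n(H)$ lifts to some $\bar t \in \Gamma_n(G/M)$; it then remains to lift $\bar t$ across $G \twoheadrightarrow G/M$, a map whose kernel $M$ is minimal normal in $G$. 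Thus it suffices to prove: if $M \trianglelefteq G$ is minimal normal and $\bar t \in \Gamma_n(G/M)$, then some preimage $n$-tuple of $\bar t$ generates $G$.

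For this, fix preimages $g_1, \dots, g_n \in G$ of the entries of $\bar t$ and set $U = \langle g_1, \dots, g_n \rangle$; since the $g_i$ project onto generators of $G/M$ we have $UM = G$, and we are done if $U = G$. The real content is a counting identity: the number of preimage $n$-tuples of $\bar t$ that generate $G$ does not depend on $\bar t$. I would prove this first when $M$ is abelian, hence elementary abelian. If $U \neq G$, then $U \cap M$ is normalized by $U$ and, because $M$ is abelian, also by $M$, hence by $UM = G$; minimality of $M$ then forces $U \cap M = 1$, so $U$ is a complement of $M$ in $G$ and the quotient map $G \to G/M$ restricts to an isomorphism $U \xrightarrow{\sim} G/M$. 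This produces a bijection between the preimage $n$-tuples of $\bar t$ that fail to generate $G$ and the complements of $M$ in $G$: a non-generating lift generates some complement $U$ and is the unique lift of $\bar t$ lying in $U$, while conversely each complement of $M$ contains a unique lift of $\bar t$, which generates it. Since $\bar t$ has exactly $|M|^n$ preimage $n$-tuples in total, the number generating $G$ equals $|M|^n$ minus the number of complements of $M$ in $G$ — a quantity visibly independent of $\bar t$. Finally, because $n \ge r(G)$ the group $G$ admits a generating $n$-tuple, which lies over some $\bar t_0 \in \Gamma_n(G/M)$; hence this common count is at least $1$, and so it is positive for our $\bar t$ as well.

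The main obstacle is the case where the minimal normal subgroup $M$ is nonabelian, so that $M = T_1 \times \dots \times T_r$ with the $T_i$ isomorphic nonabelian simple groups permuted transitively under conjugation by $G$. Now the complement dichotomy breaks down: for a proper $U$ with $UM = G$ the intersection $U \cap M$ need not be normal in $G$. To recover the independence of the count from $\bar t$, I would classify the possible intersections $U \cap M$ by a Goursat-type analysis — they are subdirect products obtained by amalgamating the simple factors $T_i$ along diagonal subgroups — and, for each candidate $D$, count the lifts $(g_i)$ of $\bar t$ with $\langle g_i \rangle \cap M = D$, checking that the resulting total number of non-generating lifts, and hence the number of generating ones, is again independent of $\bar t$. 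This bookkeeping is the delicate part, and I would follow Gaschütz's original treatment here. Once the independence of the count from $\bar t$ is established in both cases, the same appeal to $n \ge r(G)$ as above completes the proof.
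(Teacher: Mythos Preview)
The paper does not actually prove Gasch\"utz's Lemma; it only states the result and cites Gasch\"utz's 1955 paper, so there is no in-paper argument to compare yours against.

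Judged on its own, your reduction by induction on $|N|$ to the case of a minimal normal kernel $M$ is the standard and correct opening, and your treatment of the abelian case is complete: the bijection between non-generating lifts of $\bar t$ and complements of $M$ in $G$ is exactly right, and invoking $n\ge r(G)$ to force the common count to be positive is the proper finish. The nonabelian case, however, is not really proved. You correctly identify that the complement dichotomy fails, but ``classify the intersections $U\cap M$ by a Goursat-type analysis and check independence'' is precisely the hard step, and you have deferred it to the literature rather than carried it out. Your description of those intersections as subdirect products amalgamating the $T_i$ along diagonals is also not quite accurate in general: for a proper $U$ with $UM=G$, the subgroup $U\cap M$ need not project onto every simple factor. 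If you want a uniform route that avoids the abelian/nonabelian split entirely: for any $U\le G$ with $UM=G$ and any fixed preimages $g_i$ of the entries of $\bar t$, the number of $(m_1,\dots,m_n)\in M^n$ with every $g_im_i\in U$ is $|U\cap M|^n$, a quantity independent of $\bar t$; M\"obius inversion over the subgroup lattice of $G$ then expresses the number of generating lifts of $\bar t$ as $\sum_{U\le G,\;UM=G}\mu(U,G)\,|U\cap M|^n$, which is visibly independent of $\bar t$, and the positivity argument via $n\ge r(G)$ concludes as before.
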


\section{Properties of Homogeneous Covers}
Assume $G$ is a finite group and $r(G) \leq n$. Let $O_{1}, \ldots, O_{h_{n}(g)}$ be the orbits of $\operatorname{Aut}(G)$ on $\Gamma_{n}(G)$. For $i \leq h_{n}(G)$, we let $s_{i} \in O_{i}$. We take $\pi_{s_{i}}: F_{n} \rightarrow G$ and $K_{s_{i}} \leq F_{n}$ for $i \leq h_{n}(G)$ to be defined as in Definition 2.1. From Corollary 2.2, $K_{1}, \ldots, K_{h_{n}(G)}$ are the distinct kernels of all surjective homomorphisms $F_{n} \rightarrow G$. Let $K=\bigcap_{i \leq h_{n}(G)} K_{i} . K$ is the kernel of the map $F_{n} \rightarrow G^{h_{n}(G)}$ defined by $a_{i} \rightarrow\left(\pi_{s_{1}}\left(a_{i}\right), \ldots, \pi_{s_{h_{n}(G)}}\left(a_{i}\right)\right)$ for $i \leq n$. Hence, $F_{n} / K$ is a subdirect product of $h_{n}(G)$ copies of $G$.

\begin{definition} We define $H(n, G)=F_{n} / K$ to be the $n t h$ homogeneous cover of $G$.
\end{definition}

\begin{proposition} If $G$ is homogeneous of rank $n, G=H(n, G)$.
\end{proposition}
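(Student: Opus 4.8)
The plan is to reduce the statement to the numerical characterization of homogeneity and then apply the first isomorphism theorem. First I would observe that the hypothesis ``$G$ is homogeneous of rank $n$'' already guarantees that $H(n,G)$ is defined: by the proposition showing $r(G)=n$ for nontrivial homogeneous groups of rank $n$ (and trivially for the trivial group), we have $r(G)\le n$, which is exactly the standing assumption under which $H(n,G)=F_n/K$ was constructed. So there is nothing to check on that front beyond citing the earlier result.

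Next I would invoke the proposition stating that $G$ is homogeneous of rank $n$ if and only if $h_n(G)=1$. Thus $\operatorname{Aut}(G)$ acts transitively on $\Gamma_n(G)$, so there is a single orbit $O_1$, and by Corollary 2.2 there is exactly one kernel among all surjective homomorphisms $F_n\to G$; call it $K_{s_1}$, where $s_1\in\Gamma_n(G)$ is arbitrary. Consequently the intersection $K=\bigcap_{i\le h_n(G)}K_i$ in the definition of $H(n,G)$ has only one term, namely $K=K_{s_1}$. Hence $H(n,G)=F_n/K=F_n/K_{s_1}$, and since $\pi_{s_1}\colon F_n\to G$ is surjective with $\ker\pi_{s_1}=K_{s_1}$, the first isomorphism theorem gives $F_n/K_{s_1}\cong G$. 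Reading the displayed equality ``$G=H(n,G)$'' up to this canonical isomorphism completes the argument.

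I do not expect a genuine obstacle in this proof; it is essentially bookkeeping. The only point deserving a moment's care is making explicit that the single hypothesis supplies two things at once — that $r(G)\le n$, so the construction of $H(n,G)$ is legitimate, and that $h_n(G)=1$, so the defining subdirect product degenerates to one copy of $G$ — both of which follow directly from propositions already established above.
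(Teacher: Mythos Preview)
Your argument is correct and follows the same route as the paper's proof: homogeneity forces all surjections $F_n\to G$ to share a single kernel $K_0$, whence $H(n,G)=F_n/K_0\cong G$. The paper states this in two lines without the surrounding bookkeeping about $r(G)\le n$, but the substance is identical.
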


\begin{proof} All surjective homomorphisms $F_{n} \rightarrow G$ have the same kernel $K_{0}$. Hence, $G \approx F_{n} / K_{0}=H(n, G)$.
\end{proof}
\section{Subdirect Products and Homogeneous Covers}
Let $G$ be an $n$-generated finite group. Propositions 2.1 and 2.3 imply that any $n$-generated subdirect product of copies of $G$ is a quotient of $H(n, G)$. Hence, $H(n, G)$ is the largest subdirect product of copies of $G$ that can be generated by $n$ elements. These remarks prove the following.

\begin{proposition} Any n-generated subdirect product of copies of $G$ is a quotient of $H(n, G)$.
\end{proposition}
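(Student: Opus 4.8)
The plan is to read off the conclusion from Proposition~2.3 together with the explicit description of the normal subgroup $K$ used to define $H(n,G)$. Let $H$ be an $n$-generated subdirect product of copies of $G$, say $H \le G^{m}$ with each coordinate projection $p_{i}\colon H \to G$ surjective. Since $H$ is $n$-generated we have $r(H) \le n$, so Proposition~2.3 applies and produces an isomorphism $H \cong F_{n}/L$ where $L = \bigcap_{i=1}^{m}\operatorname{ker} f_{i}$ and each $f_{i}\colon F_{n}\to G$ is surjective. (Concretely, if $f\colon F_{n}\to H$ is any surjection then $f_{i}=p_{i}\circ f$, and surjectivity of $f_{i}$ is automatic because $p_{i}$ is surjective by the definition of a subdirect product — this is exactly the point where the hypothesis ``subdirect'' is used, just as ``$n$-generated'' is what is used to guarantee $r(H)\le n$.)

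Next I would observe that every $\operatorname{ker} f_{i}$ already occurs among the kernels $K_{1},\dots,K_{h_{n}(G)}$ appearing in the definition of $H(n,G)$. This is precisely Corollary~2.2: the $K_{j}$ form the complete list of distinct kernels of surjective homomorphisms $F_{n}\to G$, and each $f_{i}$ is one such homomorphism. Hence
$$ K \;=\; \bigcap_{j \le h_{n}(G)} K_{j} \;\subseteq\; \bigcap_{i \le m}\operatorname{ker} f_{i} \;=\; L. $$
Since $K \subseteq L$, the quotient map $F_{n}\to F_{n}/L$ factors through $F_{n}/K$, yielding a surjective homomorphism $H(n,G)=F_{n}/K \to F_{n}/L \cong H$. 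Thus $H$ is a quotient of $H(n,G)$, and since $H(n,G)$ is itself an $n$-generated subdirect product of copies of $G$, it is the largest such.

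I do not expect any genuine obstacle: the two substantive inputs — realizing an $n$-generated subdirect product of copies of $G$ as $F_{n}/L$ with $L$ an intersection of kernels of surjections onto $G$, and knowing that $K$ is the intersection of \emph{all} such kernels — are supplied respectively by Proposition~2.3 and Corollary~2.2, and the remainder is the elementary fact that a smaller normal subgroup gives a larger quotient. The only things worth stating carefully in the write-up are the two bookkeeping points flagged above (surjectivity of the $f_{i}$ and the inequality $r(H)\le n$), so that the hypotheses of Proposition~2.3 are visibly met.
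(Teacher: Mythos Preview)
Your proof is correct and follows exactly the route the paper intends: the paper's own argument is just the sentence ``Propositions 2.1 and 2.3 imply that any $n$-generated subdirect product of copies of $G$ is a quotient of $H(n,G)$,'' and you have simply unpacked that sentence (using Corollary~2.2, the immediate consequence of Proposition~2.1, together with Proposition~2.3 and the definition of $K$). There is nothing to add.
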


\begin{corollary} Any n-generated subdirect product of copies of $H(n, G)$ must be isomorphic $H(n, G)$.
\end{corollary}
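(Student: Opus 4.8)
The plan is to sandwich the given group between $H(n,G)$ and itself by producing surjections in both directions and then invoking finiteness. Write $P \leq H(n,G)^{m}$ for the given $n$-generated subdirect product (so $m \geq 1$ and each coordinate projection $p_{i}\colon P \to H(n,G)$ is surjective). The goal is to show $|P| = |H(n,G)|$ and that one of these surjections is then an isomorphism.

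First I would observe that $P$ is itself a subdirect product of copies of $G$. By construction $H(n,G)$ is a subdirect product of $h_{n}(G)$ copies of $G$, say $H(n,G) \leq G^{h_{n}(G)}$ with coordinate projections $q_{j}\colon H(n,G) \to G$ surjective. Then $H(n,G)^{m} \leq G^{h_{n}(G)\,m}$, and the coordinate projections of $P$ into these $h_{n}(G)\,m$ copies of $G$ are precisely the composites $q_{j}\circ p_{i}$, each a composition of surjections and hence surjective. So $P$ is an $n$-generated subdirect product of copies of $G$, and in particular $r(P) \leq n$.

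Now Proposition 4.1 applies to $P$: any $n$-generated subdirect product of copies of $G$ is a quotient of $H(n,G)$, so there is a surjection $H(n,G) \twoheadrightarrow P$, whence $|P| \leq |H(n,G)|$. On the other hand, for any index $i$ the coordinate projection $p_{i}\colon P \to H(n,G)$ is surjective, so $|H(n,G)| \leq |P|$. Combining the two inequalities gives $|P| = |H(n,G)|$, and a surjective homomorphism between finite groups of equal order is an isomorphism; hence $P \cong H(n,G)$.

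There is essentially no hard step here, since the real content is already carried by Proposition 4.1; this corollary is a packaging of it. The only point requiring a little care is the second paragraph: checking that a subdirect product of subdirect products of $G$ is again a subdirect product of $G$, i.e. that surjectivity of the relevant projections is preserved under composition, and confirming that the hypothesis needed to apply Proposition 4.1, namely that $P$ be $n$-generated, holds — which it does by assumption.
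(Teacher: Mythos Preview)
Your proof is correct and follows exactly the paper's approach: the paper's proof is the single sentence ``Observe that any subdirect product of copies of $H(n,G)$ is a subdirect product of copies of $G$,'' leaving the sandwiching argument via Proposition~4.1 and the coordinate projection implicit, whereas you have spelled all of this out in full detail.
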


\begin{proof} Observe that any subdirect product of copies of $H(n, G)$ is a subdirect product of copies of $G$.
\end{proof}
\begin{corollary} Any n-generated subdirect product of quotients of $G$ must be a quotient of $H(n, G)$.
\end{corollary}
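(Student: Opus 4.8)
The plan is to realize such a subdirect product as a quotient $F_n/K$ and then to show that $K$ contains the normal subgroup $K_H \le F_n$ for which $H(n,G) = F_n/K_H$; the passage from quotients of $G$ back to $G$ itself is exactly where Gaschütz's Lemma enters.

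First I would fix notation: let $Q_1,\dots,Q_m$ be quotients of $G$ and let $P \le Q_1\times\cdots\times Q_m$ be a subdirect product with $r(P)\le n$. Since each $Q_i$ is a quotient of the $n$-generated group $G$, it is itself $n$-generated, so Proposition 2.3 applies and produces an isomorphism $P\cong F_n/K$ with $K=\bigcap_{i\le m}\ker f_i$ for suitable surjective homomorphisms $f_i\colon F_n\to Q_i$.

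Next, for each $i$ choose a surjection $g_i\colon G\to Q_i$ exhibiting $Q_i$ as a quotient of $G$. The sequence $(f_i(a_1),\dots,f_i(a_n))$ lies in $\Gamma_n(Q_i)$ because $f_i$ is onto, so Gaschütz's Lemma — applicable since $n\ge r(G)$ — yields a generating sequence $(t_1,\dots,t_n)\in\Gamma_n(G)$ with $g_i(t_j)=f_i(a_j)$ for all $j\le n$. Letting $h_i\colon F_n\to G$ be the (surjective) homomorphism sending $a_j$ to $t_j$, the homomorphisms $g_i\circ h_i$ and $f_i$ agree on the free generators $a_1,\dots,a_n$, hence $g_i\circ h_i=f_i$, and therefore $\ker h_i\subseteq\ker f_i$.

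Finally, recall that $H(n,G)=F_n/K_H$ where $K_H=\bigcap\{\ker\varphi : \varphi\colon F_n\to G\ \text{surjective}\}$. Each $h_i$ is one of these surjections, so $K_H\subseteq\ker h_i\subseteq\ker f_i$ for every $i$; intersecting over $i$ gives $K_H\subseteq K$, and consequently $P\cong F_n/K$ is a quotient of $F_n/K_H=H(n,G)$. The one step that genuinely requires care is the invocation of Gaschütz's Lemma: one must verify that $(f_i(a_1),\dots,f_i(a_n))$ is a bona fide generating sequence of $Q_i$ and that the lift the Lemma provides is a generating sequence of $G$, so that $h_i$ is surjective and $\ker h_i$ is indeed one of the kernels intersected to form $K_H$; everything else is a routine diagram chase.
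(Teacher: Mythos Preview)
Your proof is correct and rests on the same key step as the paper's: Gasch\"utz's Lemma is used to lift each coordinate generating sequence of $Q_i$ back to a generating sequence of $G$. The packaging differs slightly. The paper assembles the lifted sequences into an explicit intermediate subdirect product $K\le G^m$, notes that $K$ surjects onto the given subdirect product via the coordinatewise quotient map, and then invokes Proposition~4.1 to conclude that $K$ (and hence the original product) is a quotient of $H(n,G)$. You instead work entirely inside $F_n$: after invoking Proposition~2.3 you show $K_H\subseteq\ker h_i\subseteq\ker f_i$ directly from the description of $K_H$ as the intersection of all kernels of surjections $F_n\to G$. In effect you have inlined the proof of Proposition~4.1, so the two arguments coincide at the level of content; your version has the minor advantage of making the role of the free group explicit throughout, while the paper's version is a bit more concrete in that the intermediate object lives visibly inside $G^m$.
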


\begin{proof} Let $H \leq G / H_{1} \times \cdots \times G / H_{m}$ be an $n$-generated subdirect product of $G / H_{1}, \ldots, G / H_{m} . \quad H=\left\langle\left(s_{11} H_{1}, \ldots, s_{m 1} H_{m}\right), \ldots,\left(s_{1 n} H_{1}, \ldots, s_{m n} H_{m}\right)\right\rangle$ where $\left(s_{i 1} H_{i}, \ldots, s_{i n} H_{i}\right)$ is a generating sequence of $G / H_{i}$ for $i \leq m$. From Gaschütz's Lemma, there exists a lift of each $\left(s_{i 1} H_{i}, \ldots, s_{i n} H_{i}\right)$ to a generating sequence $\left(t_{i 1}, \ldots, t_{i n}\right)$ of $G$ for $i \leq m$ via the quotient map. Then $$K=\left\langle\left(t_{11}, \ldots, t_{m 1}\right), \ldots,\left(t_{1 n}, \ldots, t_{m n}\right)\right\rangle \leq G^{m}$$ is an $n$-generated subdirect product of $m$ copies of $G$. Notice also that $K$ maps surjectively onto $H$ by the quotient map $G^{m} \rightarrow G / H_{1} \times \cdots \times G / H_{m} . K$ is a quotient of $H(n, G)$ by Proposition 4.1. Hence, $H$ is a quotient of $H(n, G)$.
\end{proof}

Proposition 4.1 implies the following proposition.

\begin{proposition} Let $G$ be a finite n-generated group. Suppose $n>m \geq$ $r(G)$. Then there exists a surjection $H(n, G) \rightarrow H(m, G)$.
\end{proposition}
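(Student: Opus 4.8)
The plan is to recognize $H(m,G)$ as an $n$-generated subdirect product of copies of $G$ and then invoke Proposition 4.1. Recall from the construction preceding Definition 3.1 that $H(m,G) = F_m / K'$, where $K'$ is the intersection of the kernels of all surjective homomorphisms $F_m \to G$, and that $H(m,G)$ is realized as a subdirect product of $h_m(G)$ copies of $G$ inside $G^{h_m(G)}$. So it remains only to see that this subdirect product can be generated by $n$ elements, after which Proposition 4.1 gives that $H(m,G)$ is a quotient of $H(n,G)$, i.e.\ that there is a surjection $H(n,G) \to H(m,G)$.

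First I would observe that $H(m,G)$ is $n$-generated. It is generated by the images $\overline{a_1}, \ldots, \overline{a_m}$ of the free generators of $F_m$, so since $n > m$ the sequence $(\overline{a_1}, \ldots, \overline{a_m}, 1, \ldots, 1)$ of length $n$ is a generating sequence of $H(m,G)$. This padding is the only place the hypothesis $n > m$ is used; the hypothesis $m \ge r(G)$ is needed merely so that $\Gamma_m(G) \neq \emptyset$ and the construction of $H(m,G)$ is non-vacuous. If one prefers an explicit map rather than an appeal to Proposition 4.1, let $\varphi : F_n \to H(m,G)$ be the homomorphism with $\varphi(a_i) = \overline{a_i}$ for $i \le m$ and $\varphi(a_i) = 1$ for $m < i \le n$; by the previous sentence $\varphi$ is surjective. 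Composing $\varphi$ with each of the $h_m(G)$ coordinate projections $H(m,G) \to G$ (which are surjective by the subdirect-product property) gives surjective homomorphisms $F_n \to G$, and each of their kernels contains the kernel $K$ of $F_n \to H(n,G)$, since $K$ is by definition the intersection of the kernels of all surjective homomorphisms $F_n \to G$. Hence $\ker \varphi$, being the intersection of these kernels, contains $K$, so $\varphi$ factors through $H(n,G) = F_n/K$ to give the desired surjection onto $H(m,G)$.

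There is no real obstacle here: the statement is an immediate corollary of Proposition 4.1 once the padding observation is made, exactly as the sentence preceding the proposition indicates. The only point deserving any care is the bookkeeping ensuring that the composites of $\varphi$ with the coordinate projections are genuinely \emph{surjective} onto $G$ — so that $\ker\varphi$ is literally an intersection of kernels of surjections $F_n \to G$ and Proposition 2.3 (via Corollary 2.2 and the construction of $K$) applies — but this is immediate from the fact that $H(m,G)$ is a subdirect product and $\varphi$ is surjective.
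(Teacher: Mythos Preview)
Your proof is correct and follows exactly the paper's approach: observe that $H(m,G)$ is an $n$-generated subdirect product of copies of $G$ (via padding an $m$-element generating sequence with identities) and then invoke Proposition~4.1. The paper additionally writes down an explicit surjection as a coordinate projection $G^{h_n(G)} \to G^{h_m(G)}$ after suitably ordering the orbit representatives, whereas your explicit description factors the free-group map $F_n \to H(m,G)$ through $F_n/K$; these are two presentations of the same homomorphism and the difference is purely cosmetic.
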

\begin{proof} The proposition is proven by noting that $H(m, G)$ is an $n$-generated subdirect product of copies of $G$. Hence, Proposition 4.1 applies. We give an explicit example of the surjection.

As at the beginning of the section, let $s_{1}, \ldots, s_{h_{n}(G)}$ be a set of representatives from the orbits of $\operatorname{Aut}(G)$ on $\Gamma_{n}(G)$. After reordering the $s_{i}$ if necessary, we can assume that for some $l \leq h_{n}(G), s_{i}=\left(x_{i}, z\right)$ for all $i \leq l$ where $z \in G^{h_{n}(G)-m}$ consists entirely of ones. Then $l=h_{m}(G)$ and $x_{1}, \ldots, x_{h_{m}(G)}$ is a set of representatives from the orbits of $\operatorname{Aut}(G)$ on $\Gamma_{m}(G)$. We assume that $H(n, G) \leq G^{h_{n}(G)}$ is of the form described at the beginning of this section where $s_{i}$ corresponds to the $i t h$ factor of $G^{h_{n}(G)}$ for $i \leq h_{n}(G)$. We next let $p: G^{h_{n}(G)} \rightarrow G^{h_{m}(G)}$ be the projection map onto the first $h_{m}(G)$ coordinates of $G^{h_{n}(G)}$. Then $p$ restricted to $H(n, G)$ maps onto $H(m, G)$.
\end{proof}

It is an open question of when the homomorphism from the previous proposition splits. In our later computations of homogeneous covers, we will see that the homomorphism splits for homogeneous covers of simple groups and abelian groups. The splitting of the homomorphism from the proposition
is related to developing a $K$-theory for finite groups as mentioned in the introduction.

We next prove that $H(n, G)$ is homogeneous of rank $n$. First, we recall Goursat's lemma.

\begin{lemma}[Goursat] Let $H_{1}$ and $H_{2}$ be finite groups. Suppose $H \leq$ $H_{1} \times H_{2}$. There exists subgroups $S_{1} \unlhd \overline{S_{1}} \leq H_{1}, S_{2} \unlhd \overline{S_{2}} \leq H_{2}$ and an isomorphism $f: \overline{S_{1}} / S_{1} \rightarrow \overline{S_{2}} / S_{2}$ such that $H=\left\{\left(g_{1}, g_{2}\right) \in \overline{S_{1}} \times \overline{S_{2}}\right.$ : $\left.f\left(g_{1} S_{1}\right) S_{2}=g_{2} S_{2}\right\}$. The converse is also true.
\end{lemma}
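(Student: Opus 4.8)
The plan is to extract the data $(\overline{S_1}, S_1, \overline{S_2}, S_2, f)$ directly from $H$ using the two coordinate projections. Write $p_i \colon H_1 \times H_2 \to H_i$ for the projections, and set $\overline{S_i} = p_i(H)$, so that $H$ is a subdirect product of $\overline{S_1}$ and $\overline{S_2}$. Set $S_1 = \{g \in H_1 : (g,1) \in H\}$ and $S_2 = \{g \in H_2 : (1,g) \in H\}$. The first (routine) step is to check $S_1 \unlhd \overline{S_1}$ and $S_2 \unlhd \overline{S_2}$: given $g_1 \in \overline{S_1}$, pick $g_2$ with $(g_1,g_2) \in H$; for $s \in S_1$ the conjugate $(g_1,g_2)(s,1)(g_1,g_2)^{-1}$ lies in $H$ and has trivial second coordinate, so its first coordinate $g_1 s g_1^{-1}$ lies in $S_1$.

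The core of the proof is the construction of $f \colon \overline{S_1}/S_1 \to \overline{S_2}/S_2$. For $g_1 \in \overline{S_1}$, choose any $g_2$ with $(g_1,g_2) \in H$ and set $f(g_1 S_1) = g_2 S_2$. I would verify well-definedness in two separate stages: if $(g_1,g_2)$ and $(g_1,g_2')$ both lie in $H$ then $(1, g_2^{-1}g_2') \in H$, so $g_2 S_2 = g_2' S_2$ (the value does not depend on the chosen lift); and if $g_1' = g_1 s$ with $s \in S_1$, then $(g_1', g_2) \in H$, which reduces to the previous case (the value does not depend on the coset representative). That $f$ is a homomorphism is immediate from coordinatewise multiplication in $H$. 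Surjectivity is clear since $\overline{S_2} = p_2(H)$; for injectivity, $f(g_1 S_1) = S_2$ means the chosen lift $(g_1,g_2)$ has $g_2 \in S_2$, so $(1,g_2) \in H$, hence $(g_1,1) = (g_1,g_2)(1,g_2)^{-1} \in H$ and $g_1 \in S_1$.

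With $f$ constructed, the identity $H = \{(g_1,g_2) \in \overline{S_1} \times \overline{S_2} : f(g_1 S_1) = g_2 S_2\}$ follows by double inclusion. The inclusion ``$\subseteq$'' is just the definition of $f$. For ``$\supseteq$'', given $(g_1, g_2)$ on the right-hand side, pick a lift $(g_1, g_2') \in H$; then $f(g_1 S_1) = g_2' S_2 = g_2 S_2$, so $g_2 = g_2' s_2$ with $s_2 \in S_2$, and $(g_1, g_2) = (g_1, g_2')(1, s_2) \in H$. For the converse, I would start from arbitrary $S_i \unlhd \overline{S_i} \le H_i$ and an isomorphism $f$, define $H$ by the displayed formula, and check it is a subgroup of $H_1 \times H_2$: it contains $(1,1)$, and closure under products and inverses is precisely the assertion that $f$ respects multiplication and inversion of cosets.

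I do not expect a serious obstacle; the proof is pure bookkeeping. The only delicate point is to keep the two independence checks for $f$ --- independence of the chosen lift and independence of the coset representative --- logically separate and in that order, since merging them is the usual source of a gap.
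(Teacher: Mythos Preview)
Your argument is the standard, correct proof of Goursat's lemma: projections give $\overline{S_i}$, the kernels of the restricted projections give $S_i$, and the relation ``$(g_1,g_2)\in H$'' descends to an isomorphism $\overline{S_1}/S_1 \cong \overline{S_2}/S_2$. The well-definedness, bijectivity, and double-inclusion checks are all fine, and your caution about separating the two independence checks for $f$ is apt.

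As for comparison with the paper: there is nothing to compare. The paper merely \emph{recalls} Goursat's lemma as a known result and provides no proof of it; it is cited only as a tool in the proof of Proposition~4.6. So your write-up is not an alternative to the paper's argument but rather a supplement filling in what the paper takes for granted.
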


\begin{proposition} $H(n, G)$ is homogeneous of rank $n$.
\end{proposition}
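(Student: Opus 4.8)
The target is to show $h_n(H(n,G)) = 1$, which by Proposition 1.3 is exactly the assertion that $H(n,G)$ is homogeneous of rank $n$ (note $H(n,G)$ is $n$-generated, being a quotient of $F_n$, so $h_n$ is defined). By Corollary 2.2 this reduces to proving that every surjective homomorphism $F_n \to H(n,G)$ has the same kernel. Since $H(n,G) = F_n/K$ with $K = \bigcap_{i \le h_n(G)} K_i$, the quotient map $F_n \to H(n,G)$ already has kernel $K$, so it is enough to show that an arbitrary surjective homomorphism $\phi \colon F_n \to H(n,G)$ satisfies $\ker\phi = K$.

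First I would exploit the defining feature of $H(n,G)$: it is a subdirect product of $h_n(G)$ copies of $G$, so each coordinate projection $p_j \colon H(n,G) \to G$ is onto. Hence $p_j \circ \phi \colon F_n \to G$ is a surjective homomorphism, and by Corollary 2.2 (together with the discussion preceding Definition 3.1, that $K_1, \dots, K_{h_n(G)}$ exhaust the kernels of such maps) we have $\ker(p_j \circ \phi) = K_{i_j}$ for some index $i_j$. Because $\phi(x) = (p_1(\phi(x)), \dots, p_{h_n(G)}(\phi(x)))$, we get $\ker\phi = \bigcap_j \ker(p_j \circ \phi)$, which is an intersection of certain of the $K_i$; in particular $K = \bigcap_i K_i \subseteq \ker\phi$.

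To upgrade this containment to equality I would use finiteness: $\phi$ is surjective, so $F_n/\ker\phi \cong H(n,G) = F_n/K$, giving $[F_n : \ker\phi] = [F_n : K] < \infty$, and together with $K \le \ker\phi$ this forces $\ker\phi = K$. Thus all surjective homomorphisms $F_n \to H(n,G)$ have kernel $K$, so by Corollary 2.2 there is exactly one $\operatorname{Aut}(H(n,G))$-orbit on $\Gamma_n(H(n,G))$, whence $h_n(H(n,G)) = 1$ and the proposition follows from Proposition 1.3.

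The argument is short and I do not foresee a real obstacle; the one step that needs care is the inclusion $K \subseteq \ker\phi$, which rests on the fact that $K_1, \dots, K_{h_n(G)}$ really are all of the kernels of surjective homomorphisms $F_n \to G$, so that each $\ker(p_j \circ \phi)$ is genuinely one of them. Goursat's lemma, recalled just above, is not strictly needed for this route, although it could instead be used to give a more structural description of the $n$-generated subdirect products of copies of $G$ that occur here; I would nonetheless favour the counting argument since it is more direct.
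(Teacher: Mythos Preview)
Your argument is correct, and it follows a genuinely different route from the paper's. The paper argues by contradiction: given two inequivalent generating $n$-tuples $s,t$ of $H(n,G)$, it forms the subdirect product $H=\langle (s_1,t_1),\ldots,(s_n,t_n)\rangle\le H(n,G)^2$, applies Goursat's lemma to obtain $H_1,H_2\unlhd H(n,G)$ with $H(n,G)/H_1\cong H(n,G)/H_2$ describing $H$, notes that inequivalence forces $H_1,H_2\neq\{1\}$ and hence $|H|>|H(n,G)|$, and then contradicts Corollary~4.2. Your approach instead works directly at the level of kernels in $F_n$: you show every surjective $\phi\colon F_n\to H(n,G)$ has $\ker\phi\supseteq K$ by factoring through the coordinate projections, and then force equality by an index comparison. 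Your route is more elementary in that it avoids Goursat's lemma and Corollary~4.2 altogether, needing only Corollary~2.2 and the finiteness of $H(n,G)$; the paper's route, by contrast, is more structural and illustrates how Corollary~4.2 controls $n$-generated subdirect products of $H(n,G)$, a theme used again later. Both are clean; your observation that Goursat is unnecessary here is well taken.
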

\begin{proof} Suppose there exist generating sequences $s=\left(s_{1}, \ldots, s_{n}\right)$ and $t=$ $\left(t_{1}, \ldots, t_{n}\right)$ that are not equivalent under the action of $\operatorname{Aut}(H(n, G))$. Consider $H=\left\langle\left(s_{1}, t_{1}\right), \ldots,\left(s_{n}, t_{n}\right)\right\rangle \leq H(n, G) \times H(n, G) . \quad H$ is a subdirect product of 2 copies of $H(n, G)$. From Goursat's lemma, there exist $H_{1}, H_{2} \unlhd$ $H(n, G)$ and an isomorphism $f: H(n, G) / H_{1} \rightarrow H(n, G) / H_{2}$ such that $$H=\left\{\left(g_{1}, g_{2}\right) \in H(n, G) \times H(n, G): f\left(g_{1} H_{1}\right) H_{2}=g_{2} H_{2}\right\}.$$ As $s$ and $t$ are not equivalent under the action of $\operatorname{Aut}(H(n, G)), H_{1}, H_{2} \neq\{1\}$. It follows that $|H|>|H(n, G)|$, contradicting Corollary 4.2.
\end{proof}

Before describing the universal mapping properties of $H(n, G)$, we note that $H(n, G)$ and $G$ have similar properties as a result of $H(n, G)$ being a subdirect product of copies of $G$.

\begin{proposition}

\begin{enumerate}
 
  \item $\exp (H(n, G))=\exp (G)$.
  \item $H(n, G)$ is abelian if and only if $G$ is abelian.

  \item $H(n, G)$ is nilpotent if and only if $G$ is nilpotent.

  \item $H(n, G)$ and $G$ have the same Jordan-Hölder composition factors but not necessarily up to multiplicity.

  \item $H(n, G)$ is solvable if and only if $G$ is solvable.

\end{enumerate}
\end{proposition}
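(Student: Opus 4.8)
The plan is to run everything off the two structural facts established just before the statement: $H(n,G)$ is a subdirect product of $k:=h_n(G)$ copies of $G$. This gives (i) an embedding $H(n,G)\hookrightarrow G^{k}$, and (ii) surjections $p_j\colon H(n,G)\to G$ given by the coordinate projections, so in particular $G$ is a quotient of $H(n,G)$.

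Parts (1), (2), (3), (5) are then routine. For (1): by (i) every element of $H(n,G)$ has order dividing $\exp(G^{k})=\exp(G)$, hence $\exp(H(n,G))\mid\exp(G)$; and by (ii) each $g\in G$ has a preimage $h$, so $\operatorname{ord}(g)\mid\operatorname{ord}(h)\mid\exp(H(n,G))$, hence $\exp(G)\mid\exp(H(n,G))$. For (2), (3), (5): the class of abelian (resp.\ nilpotent, resp.\ solvable) finite groups is closed under subgroups, finite direct products, and homomorphic images, so $G$ in the class forces $G^{k}$ and hence, by (i), $H(n,G)$ into the class, while $H(n,G)$ in the class forces $G$ into the class by (ii).

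The one part needing an idea is (4). One inclusion is free: by (ii) $G$ is a quotient of $H(n,G)$, and the composition factors of a quotient occur among those of the group, so every composition factor of $G$ is one of $H(n,G)$. The reverse inclusion is where the main difficulty lies, since composition factors of an arbitrary \emph{subgroup} need not appear among those of the overgroup, so (i) alone says nothing and one must use the subdirect structure. I would isolate the lemma: if $H$ is a subdirect product of finite groups $G_1,\dots,G_k$, then every composition factor of $H$ is a composition factor of some $G_i$; then apply it with every $G_i=G$. The lemma is proved by induction on $k$. For $k=2$, with $H\le G_1\times G_2$ subdirect, put $N=H\cap(\{1\}\times G_2)\trianglelefteq H$; the first projection induces $H/N\cong G_1$, and $N\cong N_2$ where $N_2=\{g\in G_2:(1,g)\in H\}$ is normal in $G_2$ (normality uses that $H$ projects onto $G_2$). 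So the composition factors of $H$, read off from $1\trianglelefteq N\trianglelefteq H$, are those of $G_1$ together with those of $N_2$, and the latter lie among those of $G_2$ since $N_2\trianglelefteq G_2$. For $k>2$, let $H'$ be the image of $H$ in $G_2\times\cdots\times G_k$; then $H$ is subdirect in $G_1\times H'$ and $H'$ is subdirect in $G_2\times\cdots\times G_k$, and the claim follows from the $k=2$ case and the inductive hypothesis. This yields equality of the two sets of composition factors.

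Finally, to see that the multiplicities may genuinely differ it is enough to exhibit one example: $H(2,Z_2)$ is the image of the map $F_2\to Z_2^{3}$ with $a_1\mapsto(a,1,a)$ and $a_2\mapsto(1,a,a)$ (where $Z_2=\langle a\rangle$), which equals $\{1,(a,1,a),(1,a,a),(a,a,1)\}\cong Z_2^{2}$; this has two composition factors while $Z_2$ has one, although both have $Z_2$ as their only composition factor as a set. I expect the proof of the subdirect-product lemma — specifically getting normality of $N_2$ in $G_2$ and organizing the induction so that at each stage one really has a subdirect (not merely arbitrary) subgroup — to be the only place requiring care; everything else is bookkeeping with closure properties.
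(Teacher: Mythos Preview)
Your proposal is correct. The paper actually gives no proof of this proposition at all: it is stated immediately after the remark that ``$H(n,G)$ and $G$ have similar properties as a result of $H(n,G)$ being a subdirect product of copies of $G$'' and is left at that. Your argument is exactly the intended one, run off the embedding $H(n,G)\hookrightarrow G^{h_n(G)}$ and the coordinate surjections $H(n,G)\to G$, but you have supplied the details the paper omits.

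The only place you add real content beyond bookkeeping is part (4), where you correctly observe that being a subgroup of $G^{k}$ is not by itself enough and prove the lemma that a subdirect product of $G_1,\dots,G_k$ has composition factors drawn from those of the $G_i$. Your $k=2$ step (with $N_2\trianglelefteq G_2$ via surjectivity of the second projection) and the induction via the image $H'\le G_2\times\cdots\times G_k$ are both sound. Your example $H(2,Z_2)\cong Z_2^{2}$ is the same one the paper later records, and it does witness that multiplicities can differ. In short: same approach as the paper, just fully executed.
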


\section{Universal Mapping Properties}
Let $G$ be a finite $n$-generated group. From the previous section, we saw that $H(n, G)$ was the largest $n$-generated subdirect product of copies of $G$. In this section, we shall prove that $H(n, G)$ satisfies several universal mapping properties. One of these universal mapping properties classifies $H(n, G)$ as the smallest object with respect to a certain universal mapping property.

\begin{proposition} Let $G$ be a finite homogeneous group of rank n. Suppose there exists a surjective homomorphism $f: G \rightarrow H$ where $H$ is some finite group. For any $s=\left(s_{1}, \ldots, s_{n}\right) \in \Gamma_{n}(H)$ and $t=\left(t_{1}, \ldots, t_{n}\right) \in \Gamma_{n}(G)$, there exists a surjective homomorphism $h: G \rightarrow H$ such that $h\left(t_{i}\right)=s_{i}$ for $i \leq n$.
\end{proposition}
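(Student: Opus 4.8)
The plan is to combine Gasch\"utz's Lemma with the transitivity of the $\operatorname{Aut}(G)$-action that is the very definition of homogeneity. First I would record the easy preliminaries: since $\Gamma_n(H) \neq \emptyset$, the group $H$ is $n$-generated, and since $G$ is homogeneous of rank $n$ we have $n \geq r(G)$ (indeed $r(G) = n$ by the earlier proposition), so the hypotheses of Gasch\"utz's Lemma are met for the surjection $f \colon G \to H$.

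Next I would apply Gasch\"utz's Lemma to $f$ and the given generating sequence $s = (s_1, \ldots, s_n) \in \Gamma_n(H)$: this yields a generating sequence $u = (u_1, \ldots, u_n) \in \Gamma_n(G)$ with $f(u_i) = s_i$ for all $i \leq n$. Now both $t = (t_1, \ldots, t_n)$ and $u = (u_1, \ldots, u_n)$ lie in $\Gamma_n(G)$, and since $G$ is homogeneous of rank $n$ the group $\operatorname{Aut}(G)$ acts transitively on $\Gamma_n(G)$; hence there is $\varphi \in \operatorname{Aut}(G)$ with $\varphi(t) = u$, i.e. $\varphi(t_i) = u_i$ for every $i$.

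Finally I would set $h = f \circ \varphi$. As a composite of surjective homomorphisms, $h \colon G \to H$ is surjective, and $h(t_i) = f(\varphi(t_i)) = f(u_i) = s_i$ for $i \leq n$, which is exactly what is claimed. I do not expect a genuine obstacle here; the only points requiring care are checking the hypothesis $n \geq r(G)$ before invoking Gasch\"utz's Lemma and orienting the automorphism $\varphi$ in the correct direction ($t \mapsto u$, not $u \mapsto t$) so that the composition $f \circ \varphi$ sends $t$ to $s$. One could alternatively phrase the third step through Proposition~2.1 (equal kernels), but the direct appeal to transitivity of the action is cleaner.
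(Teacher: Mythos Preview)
Your argument is correct and matches the paper's proof essentially verbatim: lift $s$ through $f$ via Gasch\"utz's Lemma to a generating sequence of $G$, use homogeneity to find an automorphism sending $t$ to that lift, and compose. The only difference is that you spell out the verification $n\ge r(G)$ and the surjectivity of $h$, which the paper leaves implicit.
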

\begin{proof} Let $s=\left(s_{1}, \ldots, s_{n}\right) \in \Gamma_{n}(H)$ and $t=\left(t_{1}, \ldots, t_{n}\right) \in \Gamma_{n}(G)$. From Gaschütz's lemma, $s$ lifts via $f$ to a generating sequence $q=\left(q_{1}, \ldots, q_{n}\right)$ of $G$. By homogeneity of $G$, there exists an automorphism $k: G \rightarrow G$ such that $k\left(t_{i}\right)=q_{i}$ for $i \leq n$. Then $h=f \circ k$ satisfies $h\left(t_{i}\right)=s_{i}$ for $i \leq n$.
\end{proof}
\begin{proposition} Let $G$ be an n-generated finite group. Pick any surjective homomorphism $\pi: H(n, G) \rightarrow G$. For any finite n-generated homogeneous group $H$ and surjective homomorphism $j: H \rightarrow G$, there exists a surjective homormorphism $f: H \rightarrow H(n, G)$ such that $j=\pi \circ f$.
\end{proposition}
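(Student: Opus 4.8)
The plan is to realize both $H$ and $H(n,G)$ as quotients of $F_n$ and to compare the relevant kernels, the guiding observation being that the homogeneity of $H$ forces all surjections $F_n\to H$ to have one and the same kernel. First I would fix a generating sequence $t=(t_1,\dots,t_n)\in\Gamma_n(H)$, which exists because $H$ is $n$-generated; since $j$ is surjective, $j(t):=(j(t_1),\dots,j(t_n))$ lies in $\Gamma_n(G)$. Applying Gaschütz's Lemma to the surjection $\pi\colon H(n,G)\to G$ (legitimate, as $H(n,G)$ is $n$-generated so $n\ge r(H(n,G))$), I would lift $j(t)$ to a generating sequence $q=(q_1,\dots,q_n)\in\Gamma_n(H(n,G))$ with $\pi(q_i)=j(t_i)$ for every $i\le n$. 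Let $\pi_t\colon F_n\to H$ and $\pi_q\colon F_n\to H(n,G)$ be the induced surjections. The whole argument then reduces to the single inclusion $\ker\pi_t\subseteq\ker\pi_q$: granting it, $\pi_q$ factors as $\pi_q=f\circ\pi_t$ for a homomorphism $f\colon H\to H(n,G)$, which is surjective since $\pi_q$ is, which satisfies $f(t_i)=q_i$, and for which $\pi(f(t_i))=\pi(q_i)=j(t_i)$ for all $i$; as $t_1,\dots,t_n$ generate $H$, this last identity gives $\pi\circ f=j$.

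It remains to prove $\ker\pi_t\subseteq\ker\pi_q$, and here the two homogeneity hypotheses pin down both kernels. Because $H$ is homogeneous of rank $n$, the action of $\operatorname{Aut}(H)$ on $\Gamma_n(H)$ is transitive, so by Proposition~2.1 all generating sequences of $H$ induce the same kernel; thus $K_H:=\ker\pi_t$ is the common kernel of every surjection $F_n\to H$. Because $H(n,G)$ is homogeneous of rank $n$ (Proposition~4.6), all surjections $F_n\to H(n,G)$ likewise share one kernel; since the quotient map $F_n\to F_n/K=H(n,G)$ from Definition~3.1 is such a surjection, that kernel is $K$, so $\ker\pi_q=K$. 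As $K=\bigcap_i K_i$ is the intersection of the kernels of all surjective homomorphisms $F_n\to G$, it now suffices to show $K_H\subseteq\ker\rho$ for every surjection $\rho\colon F_n\to G$.

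Fix such a $\rho$ and put $s=(\rho(a_1),\dots,\rho(a_n))\in\Gamma_n(G)$. Applying Proposition~5.1 to the surjection $j\colon H\to G$ with the sequences $s$ and $t$ yields a surjective homomorphism $h\colon H\to G$ with $h(t_i)=\rho(a_i)$ for all $i$; then $h\circ\pi_t$ agrees with $\rho$ on $a_1,\dots,a_n$, so $h\circ\pi_t=\rho$ and hence $\ker\rho\supseteq\ker\pi_t=K_H$. Since $\rho$ was arbitrary, $K_H\subseteq\bigcap_\rho\ker\rho=K=\ker\pi_q$, which is exactly what was needed; this finishes the construction of $f$ and hence the proof.

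I expect the only real difficulty to be conceptual, located in the inclusion $K_H\subseteq K$: one has to notice that homogeneity of $H$ makes $\ker\pi_t$ independent of the chosen generating sequence, so that Proposition~5.1 (or, more directly, Gaschütz's Lemma together with homogeneity) lets every surjection $F_n\to G$ factor through $H=F_n/K_H$, hence through $H(n,G)=F_n/K$ since $K_H\subseteq K$. Once that is in hand, producing $f$ from $\ker\pi_t\subseteq\ker\pi_q$ and verifying $\pi\circ f=j$ on generators is entirely routine.
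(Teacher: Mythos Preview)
Your argument is correct. Both the paper and you invoke Gasch\"utz's Lemma and Proposition~5.1, but the overall architectures differ. The paper works constructively inside $H^{h_n(G)}$: it lifts a full set of orbit representatives $s_1,\dots,s_{h_n(G)}\in\Gamma_n(G)$ through $j$ to sequences $t_i\in\Gamma_n(H)$, forms the subdirect product $H_0=\langle(t_{11},\dots,t_{h_n(G)1}),\dots\rangle\le H^{h_n(G)}$, appeals to Corollary~4.2 (via $H\cong H(n,H)$) to get $H_0\cong H$, and then applies $j$ coordinatewise to land in $H(n,G)$; a second pass with Proposition~5.1 is then needed to arrange the compatibility $j=\pi\circ f$. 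Your route is a kernel comparison in $F_n$: you pick a single $t\in\Gamma_n(H)$, lift $j(t)$ through $\pi$ to $q\in\Gamma_n(H(n,G))$ so that the compatibility is built in from the outset, and reduce everything to the inclusion $\ker\pi_t\subseteq\ker\pi_q$. Identifying $\ker\pi_q$ with $K$ uses the homogeneity of $H(n,G)$ (Proposition~4.6), which the paper's proof does not invoke; in exchange you avoid the subdirect-product machinery and Corollary~4.2 entirely. Your argument is arguably tidier, since the factorization and the equation $\pi\circ f=j$ come out simultaneously rather than in two stages.
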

\begin{proof} Let $s_{1}=\left(s_{11}, \ldots, s_{1 n}\right), \ldots, s_{h_{n}(G)}=\left(s_{h_{n}(G) 1}, \ldots, s_{h_{n}(G) n}\right)$ be a set of representatives from the orbits of $\operatorname{Aut}(G)$ on $\Gamma_{n}(G)$. From Gaschütz's lemma, each $s_{i}$ lifts through $j$ to a generating sequence $t_{i}=\left(t_{i 1}, \ldots, t_{i n}\right)$ of $H$ for $i \leq h_{n}(G)$. Consider $H_{0}=\left\langle\left(t_{11}, \ldots, t_{h_{n}(G) 1}\right), \ldots,\left(t_{1 n}, \ldots, t_{h_{n}(G) n}\right)\right\rangle \leq H^{h_{n}(G)}$. $H_{0}$ is an $n$-generated subdirect product of $h_{n}(G)$ copies of $H$. Then $H_{0} \approx H$ by Corollary 4.2. Then $f_{0}: H_{0} \rightarrow H(n, G)$ defined by $f_{0}\left(g_{1}, \ldots, g_{h_{n}(G)}\right)=$ $\left(j\left(g_{1}\right), \ldots, j\left(g_{h_{n}(G)}\right)\right)$ is a surjective homomorphism. Taking $H=H_{0}$, we view $f_{0}$ as having domain $H$.

We can now apply the previous proposition to complete the proof. Let $\left(g_{1}, \ldots, g_{n}\right)$ be a generating sequence of $H$. Then $\left(j\left(g_{1}\right), \ldots, j\left(g_{n}\right)\right)$ is a generating sequence of $G$. Hence, $\left(j\left(g_{1}\right), \ldots, j\left(g_{n}\right)\right)$ lifts via $\pi$ to a generating sequence $\left(m_{1}, \ldots, m_{n}\right)$ of $H(n, G)$. From Proposition 5.1, there exists a surjective homomorphism $f: H \rightarrow H(n, G)$ such that $f\left(g_{i}\right)=m_{i}$ for $i \leq n$. Then $j=\pi \circ f$.
\end{proof}
Proposition 5.2 characterizes $H(n, G)$ in the following sense.

\begin{corollary} Suppose $H$ is a homogeneous group of rank $n$ and there exists a surjective homomorphism $\pi: H \rightarrow G$ such that for any homogeneous group $K$ of rank $n$ and surjective homomorphism $f: K \rightarrow G$ there exists a surjective homomorphism $g: K \rightarrow H$ such that $f=\pi \circ g$. Then $H=$ $H(n, G)$.
\end{corollary}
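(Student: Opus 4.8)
The plan is to produce surjective homomorphisms between $H$ and $H(n,G)$ in both directions and then invoke finiteness. First observe that the construction makes sense: since $H$ is homogeneous of rank $n$, Proposition 1.4 gives $r(H)=n$, so $H$ is $n$-generated; as $G$ is a quotient of $H$ via $\pi$, $G$ is $n$-generated as well, and hence $H(n,G)$ is defined and, by Proposition 4.6, is itself homogeneous of rank $n$.

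Step one: apply Proposition 5.2 to the homogeneous group $H$ together with the surjection $\pi\colon H\to G$ (in the role of ``$j$'' there). This yields a surjective homomorphism $f\colon H\to H(n,G)$. Step two: apply the universal property assumed in the statement with $K=H(n,G)$; this is legitimate precisely because $H(n,G)$ is homogeneous of rank $n$ and admits a surjection onto $G$. This produces a surjective homomorphism $g\colon H(n,G)\to H$.

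Step three, the conclusion: the composite $g\circ f\colon H\to H$ is a surjective endomorphism of a finite group, hence an automorphism; therefore $f$ is injective, and being surjective it is an isomorphism $H\approx H(n,G)$. (Equivalently, $f$ and $g$ being surjections force $|H|=|H(n,G)|$, so each is an isomorphism.) I do not expect a genuine obstacle here — the content is entirely formal once Propositions 5.2 and 4.6 are available. The only points requiring care are checking the domain/codomain hypotheses so that Proposition 5.2 and the hypothesized universal property can actually be invoked (in particular that $H$ and $H(n,G)$ are $n$-generated and homogeneous of rank $n$), and drawing the final conclusion from two-sided surjectivity of maps of finite groups rather than attempting to show the composites are literally the identity.
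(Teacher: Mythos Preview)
Your argument is correct and is exactly the intended one: the paper states this corollary without proof, leaving it as a formal consequence of Proposition~5.2 and Proposition~4.6, and your two-surjections-plus-finiteness argument is precisely how one fills in the details.
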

We will later use Proposition 5.2 in computations to gain information on the structure of homogeneous covers. More precisely, let $G$ be an $n$-generated finite group and let $H$ be a quotient of $G$. Then $H$ is a quotient of $H(n, G)$. Proposition 5.2 then implies that $H(n, H)$ is a quotient of $H(n, G)$.

\begin{proposition} Let $G$ be an n-generated finite group. If $K$ is a quotient of $G$ then $H(n, K)$ is a quotient of $H(n, G)$.
\end{proposition}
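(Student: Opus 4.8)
The plan is to deduce this directly from Proposition 5.2, using $H(n,G)$ itself as the ambient homogeneous group that maps onto $K$. First I would note that $K$ is $n$-generated: since $G$ is $n$-generated and $K$ is a quotient of $G$, the image of any length-$n$ generating sequence of $G$ generates $K$, so $r(K) \le n$ and $H(n,K)$ is defined.

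Next I would exhibit a surjection $H(n,G) \to K$. By construction $H(n,G)$ is a subdirect product of $h_n(G)$ copies of $G$, so projection onto any single coordinate gives a surjection $p : H(n,G) \to G$; composing with the quotient map $q : G \to K$ yields a surjection $j = q \circ p : H(n,G) \to K$.

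Now recall that $H(n,G)$ is homogeneous of rank $n$ by Proposition 4.4, and it is $n$-generated, being a quotient of $F_n$. Hence $H(n,G)$ is a finite $n$-generated homogeneous group admitting a surjection $j$ onto $K$. Fixing any surjection $\pi : H(n,K) \to K$ (one exists by the same coordinate-projection argument applied to $H(n,K)$), Proposition 5.2, applied with $K$ in place of $G$ and $H(n,G)$ in place of $H$, produces a surjective homomorphism $f : H(n,G) \to H(n,K)$ with $j = \pi \circ f$. In particular $H(n,K)$ is a quotient of $H(n,G)$, as claimed.

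There is no genuinely hard step here: all the substance is already packaged inside Propositions 4.4 and 5.2. The only points requiring care are checking that the hypotheses of Proposition 5.2 are met — namely that $H(n,G)$ is homogeneous of rank $n$ and surjects onto $K$ — and observing that $K$ is $n$-generated so that $H(n,K)$ makes sense; each of these is immediate from the material developed above.
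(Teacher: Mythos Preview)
Your argument is correct and follows exactly the route the paper sketches in the paragraph immediately preceding the proposition: observe that $K$ is a quotient of $H(n,G)$ (via $H(n,G)\twoheadrightarrow G\twoheadrightarrow K$), note that $H(n,G)$ is homogeneous of rank $n$, and then apply Proposition~5.2 with $K$ in the role of the target group and $H(n,G)$ in the role of the homogeneous group $H$. The only slip is a numbering one: the result that $H(n,G)$ is homogeneous of rank $n$ is Proposition~4.6 in the paper, not Proposition~4.4.
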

\section{Computations of Homogeneous Covers}
In this section, we compute homogeneous covers for some general classes of finite groups along with some more specific examples. The main reason for doing these computations is to learn more about the properties of homogeneous covers. Some of the techniques and results of the previous sections were discovered by making computations. We compute homogeneous covers of arbitrary rank for simple, abelian, nilpotent, and $p q$ groups. Except for simple groups, our general strategy is to use the results from the previous sections to derive information about a given homogeneous cover $H(n, G)$ by relating the structure of $G$ to the structure of $H(n, G)$.

\begin{proposition} Let $G=N \rtimes K$ be an n-generated finite group where $|N|$ and $|K|$ are relatively prime. Then $H(n, G)=N_{0} \rtimes H(n, K)$ for some $N_{0} \leq N^{h_{n}(G)}$.
\end{proposition}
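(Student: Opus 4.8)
The plan is to build $H(n,G)$ explicitly as an $n$-generated subdirect product of $h := h_n(G)$ copies of $G$ inside $G^{h}$, using a set $s_1,\dots,s_h$ of representatives of the orbits of $\operatorname{Aut}(G)$ on $\Gamma_n(G)$ (as in the construction preceding Definition 3.1), and then to project along the quotient map $q\colon G\to G/N\cong K$. Applying $q$ in each coordinate gives a surjection $Q\colon G^{h}\to K^{h}$ with kernel $N^{h}$; restricting $Q$ to $H(n,G)$ yields a surjection onto $\bar H:=Q(H(n,G))\le K^{h}$ with kernel $N_0:=H(n,G)\cap N^{h}$. Thus $N_0\trianglelefteq H(n,G)$, $N_0\le N^{h}=N^{h_n(G)}$, and $H(n,G)/N_0\cong\bar H$. (Here $K$ is a quotient of the $n$-generated group $G$, so $r(K)\le n$ and $H(n,K)$ is defined; and $\bar H$ is an $n$-generated subdirect product of copies of $K$ since $H(n,G)$ projects onto each coordinate and $q$ is surjective — these are routine checks.) A fact I would record at the outset and use twice: since $\gcd(|N|,|K|)=1$, $N$ is a normal Hall subgroup of $G$, hence the unique subgroup of its order, hence characteristic in $G$.

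The crux is to identify $\bar H$ with $H(n,K)$, not merely with a quotient of it. Composing the defining surjection $F_n\to H(n,G)\le G^{h}$ with $Q$ shows $\bar H\cong F_n/\bigcap_{i=1}^{h}K_{q(s_i)}$, where each $q(s_i)$ lies in $\Gamma_n(K)$ because $q$ is surjective. I claim $\{K_{q(s_i)}:1\le i\le h\}$ is precisely the set of kernels of surjective homomorphisms $F_n\to K$. The inclusion $\subseteq$ is clear. For $\supseteq$, take any such kernel; by Corollary 2.2 it is $K_t$ for some $t\in\Gamma_n(K)$, and by Proposition 2.1 it suffices to show $t$ is $\operatorname{Aut}(K)$-equivalent to some $q(s_i)$. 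Lift $t$ through $q$ by Gaschütz's Lemma to $\tilde t\in\Gamma_n(G)$; then $\tilde t=f(s_i)$ for some $f\in\operatorname{Aut}(G)$ and some $i$. Since $N$ is characteristic, $q\circ f$ has kernel $f^{-1}(N)=N=\ker q$, so $q\circ f=\beta\circ q$ for some $\beta\in\operatorname{Aut}(K)$, and hence $t=q(\tilde t)=\beta\!\left(q(s_i)\right)$. Therefore $\bigcap_{i=1}^{h}K_{q(s_i)}$ equals the intersection of all kernels of surjections $F_n\to K$, which is exactly the normal subgroup defining $H(n,K)$; so $\bar H\cong H(n,K)$.

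It remains to split $H(n,G)$ over $N_0$. Since $N_0\le N^{h}$ we have $|N_0|\mid|N|^{h}$, and since $H(n,G)/N_0\cong\bar H\le K^{h}$ we have $[H(n,G):N_0]\mid|K|^{h}$; as $\gcd(|N|,|K|)=1$, these orders are coprime, so $N_0$ is a normal Hall subgroup of $H(n,G)$. By the Schur–Zassenhaus theorem, $N_0$ has a complement $C$ in $H(n,G)$, and $C\cong H(n,G)/N_0\cong H(n,K)$. Hence $H(n,G)=N_0\rtimes H(n,K)$ with $N_0\le N^{h_n(G)}$, which is the assertion.

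I expect the main obstacle to be the second paragraph: ensuring that projecting $H(n,G)$ onto the $K$-coordinates recovers all of $H(n,K)$ rather than a proper quotient. The resolution rests entirely on $N$ being characteristic in $G$, which lets every automorphism of $G$ descend compatibly through $q$ and thereby forces the orbit data of $(G,\Gamma_n(G))$ to push forward onto the full orbit data of $(K,\Gamma_n(K))$. Everything else — the coordinatewise presentation chase, the verification that $\bar H$ is subdirect, and the Hall/coprimality arithmetic — is bookkeeping, and the splitting itself is immediate from Schur–Zassenhaus.
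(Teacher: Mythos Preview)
Your proof is correct, and its overall skeleton matches the paper's: embed $H(n,G)\le G^{h}$, project coordinatewise along $G\to K$, set $N_0=H(n,G)\cap N^{h}$, and split via Schur--Zassenhaus. The one substantive difference is how you identify the image $\bar H$ with $H(n,K)$. The paper argues by size: $\bar H$ is an $n$-generated subdirect product of copies of $K$, hence a quotient of $H(n,K)$; conversely Proposition~5.4 gives a surjection $H(n,G)\twoheadrightarrow H(n,K)$, which (by coprimality) must kill $N_0$ and therefore factors through $\bar H$, forcing $|\bar H|=|H(n,K)|$. You instead work at the level of $F_n$, showing directly that $\{K_{q(s_i)}\}$ exhausts all kernels of surjections $F_n\to K$ by exploiting that $N$ is characteristic so every $f\in\operatorname{Aut}(G)$ descends through $q$. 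Your route is more self-contained (it does not invoke Proposition~5.4 or the maximality characterization of $H(n,K)$) and makes transparent exactly why the coprime hypothesis is doing double duty --- once for Schur--Zassenhaus and once to make $N$ characteristic --- whereas the paper's route is shorter but leans on the earlier universal-mapping machinery.
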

\begin{proof} We view $H(n, G)$ as a subgroup of $G^{h_{n}(G)}$. Define $f: H(n, G) \rightarrow$ $K^{h_{n}(G)}$ by $f\left(g_{1}, \ldots, g_{h_{n}(G)}\right)=\left(\pi\left(g_{1}\right), \ldots, \pi\left(g_{h_{n}(G)}\right)\right)$ where $\pi: G \rightarrow K$ is the quotient map. Note that ker $f=H(n, G) \cap N^{h_{n}(G)}$. By the Schur-Zassenhaus Theorem, $H(n, G)=N_{0} \rtimes H_{0}$ where $N_{0}=\operatorname{ker} f$ and $H_{0}=f(H(n, G))$. Note that $H_{0}$ is an $n$-generated subdirect product of $h_{n}(G)$ copies of $K$. From Proposition 5.4, we know that $H(n, K)$ is a quotient of $H(n, G)$. Hence, $H_{0}=H(n, K)$ since $H(n, K)$ is the largest $n$-generated subdirect product of copies of $K$.
\end{proof}
\begin{corollary} Let $G=N \times K$ be an n-generated finite group where $|N|$ and $|K|$ are relatively prime. Then $H(n, G)=H(n, N) \times H(n, K)$.
\end{corollary}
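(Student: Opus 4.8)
The plan is to apply Proposition 6.1 twice. Since a direct product is a semidirect product with either factor playing the role of the normal subgroup, we may regard $G$ both as $N \rtimes K$ and as $K \rtimes N$, and in both cases the two factor orders are relatively prime by hypothesis. Viewing $H(n,G) \leq G^{h_n(G)}$ as at the beginning of Section 3, Proposition 6.1 applied the first way gives $H(n,G) = N_0 \rtimes H(n,K)$ with $N_0 = H(n,G) \cap N^{h_n(G)}$, where $H(n,K)$ is realized as the image of $H(n,G)$ under the coordinatewise projection $G^{h_n(G)} \to K^{h_n(G)}$; applied the second way it gives $H(n,G) = K_0 \rtimes H(n,N)$ with $K_0 = H(n,G) \cap K^{h_n(G)}$.

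Next I would check that $N_0$ and $K_0$ commute and intersect trivially. Both are normal in $H(n,G)$, being kernels of the coordinatewise projections of $H(n,G)$ onto $K^{h_n(G)}$ and onto $N^{h_n(G)}$ respectively; and $N_0 \cap K_0 \leq N^{h_n(G)} \cap K^{h_n(G)} = \{1\}$ inside $G^{h_n(G)} = N^{h_n(G)} \times K^{h_n(G)}$. Hence $[N_0,K_0] \leq N_0 \cap K_0 = \{1\}$, so $N_0 K_0 = N_0 \times K_0$ is a subgroup of $H(n,G)$.

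The heart of the argument is an order count. Since $|N|$ and $|K|$ are coprime, $|H(n,G)|$, which divides $|G|^{h_n(G)} = |N|^{h_n(G)}\,|K|^{h_n(G)}$, factors uniquely as $ab$ where every prime divisor of $a$ divides $|N|$ and every prime divisor of $b$ divides $|K|$. From $H(n,G) = N_0 \rtimes H(n,K)$ we get $|H(n,G)| = |N_0|\cdot|H(n,K)|$, with $|N_0|$ dividing $|N|^{h_n(G)}$ and $|H(n,K)|$ dividing $|K|^{h_n(G)}$; by uniqueness $|N_0| = a$ and $|H(n,K)| = b$. The symmetric decomposition forces $|K_0| = b$ and $|H(n,N)| = a$. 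Therefore $|N_0 \times K_0| = ab = |H(n,G)|$, so $H(n,G) = N_0 \times K_0$ as an internal direct product, and combining this with the two applications of Proposition 6.1 gives
$$H(n,G) = N_0 \times K_0 \cong H(n,G)/K_0 \times H(n,G)/N_0 \cong H(n,N) \times H(n,K).$$

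The only real obstacle is the bookkeeping in the order count: one must argue that $|N_0|$ captures the \emph{entire} $|N|$-part of $|H(n,G)|$, not merely some divisor of a power of $|N|$, and this is exactly where the coprimality of $|N|$ and $|K|$ combines with the semidirect decomposition of Proposition 6.1. Everything else is routine, and in effect this corollary is a cleaner repackaging of Proposition 6.1 in the direct-product case.
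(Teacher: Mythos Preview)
Your proof is correct and follows essentially the same approach as the paper: apply Proposition~6.1 twice, once with $G=N\rtimes K$ and once with $G=K\rtimes N$, and then use the coprimality of $|N|$ and $|K|$ to match up the pieces. The paper's version is extremely terse---it simply writes $H(n,G)=H_1\times H(n,K)=H(n,N)\times H_2$ and asserts the conclusion---whereas you carefully spell out the order count and the passage from semidirect to direct product via the normal subgroups $N_0,K_0$ with trivial intersection; in that sense your write-up is actually more complete than the paper's.
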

\begin{proof} Applying the previous proposition, $G=H_{1} \times H(n, K)=H(n, N) \times H_{2}$ where $\left|H_{1}\right|$ and $|K|$ are relatively prime and $\left|H_{2}\right|$ and $|N|$ are relatively prime. It follows that $H(n, G)=H(n, N) \times H(n, K)$.
\end{proof}
\begin{corollary} Let $G=G_{1} \times \cdots \times G_{m}$ be n-generated such that $\left|G_{i}\right|$ and $\left|G_{j}\right|$ are relatively prime for $i \neq j$. Then $H(n, G)=H\left(n, G_{1}\right) \times \cdots \times H\left(n, G_{m}\right)$.
\end{corollary}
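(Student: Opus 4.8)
The plan is to argue by induction on the number of factors $m$, using the two-factor case (Corollary 6.2) as the engine. The case $m = 1$ is trivial and the case $m = 2$ is exactly Corollary 6.2, so these serve as the base of the induction.

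For the inductive step, suppose $m \geq 3$ and that the statement holds for any $n$-generated direct product of fewer than $m$ groups with pairwise relatively prime orders. Write $G = G_1 \times G'$ where $G' = G_2 \times \cdots \times G_m$. First I would verify that Corollary 6.2 applies to this bipartition: the coordinate projections exhibit both $G_1$ and $G'$ as quotients of $G$, and since a quotient of an $n$-generated group is $n$-generated, both $G_1$ and $G'$ are $n$-generated; moreover $|G_1|$ is relatively prime to $|G'| = \prod_{j=2}^{m} |G_j|$ because it is relatively prime to each $|G_j|$ individually. Corollary 6.2 then gives $H(n, G) = H(n, G_1) \times H(n, G')$. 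Now $G'$ is an $n$-generated direct product of the $m - 1$ groups $G_2, \ldots, G_m$, whose orders are pairwise relatively prime, so the inductive hypothesis yields $H(n, G') = H(n, G_2) \times \cdots \times H(n, G_m)$. Substituting this into the previous identity completes the induction.

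This argument is essentially routine; the only point that warrants a moment's care is ensuring that the partial product $G_2 \times \cdots \times G_m$ is itself $n$-generated at each stage, so that the symbol $H(n, G')$ is meaningful and Corollary 6.2 is legitimately applicable. I would dispatch this uniformly by the observation that every such partial product is a homomorphic image of $G$ (kill the omitted coordinates), hence inherits $n$-generation from $G$. No genuine obstacle is expected beyond this bookkeeping.
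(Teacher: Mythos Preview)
Your proof is correct. The paper states this corollary without proof, treating it as an immediate consequence of Corollary~6.2; the induction on $m$ that you spell out is precisely the intended (and obvious) argument, and your attention to the $n$-generation of the intermediate product $G' = G_2 \times \cdots \times G_m$ covers the only detail worth checking.
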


We note that Corollary 6.2 does not hold in general for products of groups. As we will show later, $H\left(2, Z_{2}\right)=Z_{2}^{2}$ but $H\left(2, Z_{2}^{2}\right)=Z_{2}^{2}$.

\section{Abelian and Nilpotent Groups}
We now show that homogeneous covers of nilpotent groups and abelian groups have nice forms.

\begin{proposition} Let $G$ be a finite n-generated nilpotent group with distinct Sylow subgroups $P_{1}, \ldots, P_{m}$. Then $H(n, G)=H\left(n, P_{1}\right) \times \cdots \times H\left(n, P_{m}\right)$.
\end{proposition}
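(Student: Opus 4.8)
The plan is to reduce this statement immediately to Corollary 6.2 by invoking the classical structure theorem for finite nilpotent groups. First I would recall that a finite group is nilpotent if and only if it is the internal direct product of its Sylow subgroups; thus $G \cong P_1 \times \cdots \times P_m$. Since the $P_i$ are the Sylow subgroups for distinct primes, $|P_i|$ and $|P_j|$ are relatively prime whenever $i \neq j$, so the coprimality hypothesis needed for Corollary 6.2 holds automatically.

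Next I would verify the generation hypotheses. The group $G$ is $n$-generated by assumption. Each $P_i$ is a direct factor of $G$, hence a quotient of $G$, so $r(P_i) \leq r(G) \leq n$ and each $P_i$ is $n$-generated; in any case Corollary 6.2 only requires the ambient product $G = P_1 \times \cdots \times P_m$ itself to be $n$-generated, which is given. With all hypotheses in place, applying Corollary 6.2 to the decomposition $G = P_1 \times \cdots \times P_m$ yields $H(n,G) = H(n,P_1) \times \cdots \times H(n,P_m)$.

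I do not expect a genuine obstacle here: the only thing to be careful about is confirming that the Sylow decomposition is precisely a direct product of groups of pairwise relatively prime order, which is exactly the setting of Corollary 6.2 (ultimately a consequence of the Schur--Zassenhaus argument used in Proposition 6.1). Once the structure theorem for finite nilpotent groups is recalled, the proof is essentially a one-line application of the earlier corollary.
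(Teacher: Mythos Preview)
Your proposal is correct and matches the paper's proof: invoke the structure theorem for finite nilpotent groups to write $G = P_1 \times \cdots \times P_m$ as a direct product of its Sylow subgroups, note that the orders are pairwise coprime, and apply the earlier result on homogeneous covers of coprime direct products. The only cosmetic difference is that the paper cites Corollary~6.3 (the multi-factor version) rather than Corollary~6.2, but the former follows from the latter by an obvious induction, so the arguments are essentially identical.
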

\begin{proof} Note that $G=P_{1} \times \cdots \times P_{m}$. Hence, Corollary 6.3 implies that $H(n, G)=H\left(n, P_{1}\right) \times \cdots \times H\left(n, P_{m}\right)$.
\end{proof}
Proposition 7.1 is particularly useful when we know the homogeneous covers of the Sylow subgroups. Homogeneous covers of abelian $p$-groups are easy to compute as we will show. Hence, we will be able to explicitly determine homogeneous covers of finite abelian groups. First, we need a lemma concerning the order of elements in generating sequences for homogeneous groups. We will also need a second lemma which is a consequence of Burnside's Basis Theorem.

\begin{lemma} Let $H$ be a finite homogeneous group of rank $n$. Suppose $x, y \in$ $H$. If $x$ and $y$ are entries of length $n$ generating sequences of $H$ then $|x|=|y|$.
\end{lemma}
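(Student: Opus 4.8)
The plan is to reduce the statement to a one-line application of homogeneity, after first recording the elementary fact that the class of length $n$ generating sequences is closed under permuting the entries.

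First I would observe that if $(z_{1}, \ldots, z_{n}) \in \Gamma_{n}(H)$ and $\sigma$ is any permutation of $\{1, \ldots, n\}$, then $(z_{\sigma(1)}, \ldots, z_{\sigma(n)}) \in \Gamma_{n}(H)$ as well, since $\langle z_{1}, \ldots, z_{n}\rangle = \langle z_{\sigma(1)}, \ldots, z_{\sigma(n)}\rangle = H$. This is the only slightly nonautomatic ingredient, and it is what allows an arbitrary entry of a generating sequence to be moved into a prescribed coordinate.

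Now suppose $x$ appears as some entry of a sequence $s \in \Gamma_{n}(H)$ and $y$ appears as some entry of a sequence $t \in \Gamma_{n}(H)$. Using the permutation observation, I would replace $s$ and $t$ by suitable reorderings so that $x$ is the first entry of $s$ and $y$ is the first entry of $t$; write $s = (x, s_{2}, \ldots, s_{n})$ and $t = (y, t_{2}, \ldots, t_{n})$, both still in $\Gamma_{n}(H)$. Since $H$ is homogeneous of rank $n$, the action of $\operatorname{Aut}(H)$ on $\Gamma_{n}(H)$ is transitive (this is exactly the vector space property as formulated in Section 1, equivalently $h_{n}(H) = 1$), so there exists $f \in \operatorname{Aut}(H)$ with $f(s) = t$; in particular $f(x) = y$. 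An automorphism of $H$ preserves the order of every element, so $|x| = |f(x)| = |y|$, which is the desired conclusion.

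I do not expect a genuine obstacle here. The only point that must be stated with care is the permutation-invariance of $\Gamma_{n}(H)$, which is needed because homogeneity gives an automorphism carrying one full ordered sequence to another, so the two elements being compared must first be brought into the same coordinate position before transitivity of the $\operatorname{Aut}(H)$-action can be invoked.
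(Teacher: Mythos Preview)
Your proof is correct and is essentially the same as the paper's, which consists of the single line ``Observe that there exists an automorphism $f: H \rightarrow H$ such that $f(x)=y$.'' You have simply made explicit the permutation step that the paper leaves implicit.
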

\begin{proof} Observe that there exists an automorphism $f: H \rightarrow H$ such that $f(x)=y$.
\end{proof}
\begin{lemma}[Burnside] Let $P$ be a finitely generated abelian p-group. $P=$ $Z_{p^{k_{1}}} \times \cdots \times Z_{p^{k_{m}}}$ for some integers $k_{1}, \ldots, k_{m}>0$. Then $r(P)=m$.
\end{lemma}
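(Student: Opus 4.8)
The plan is to prove $r(P)=m$ by sandwiching: establish $r(P)\le m$ and $r(P)\ge m$ separately. The upper bound is immediate. Writing $Z_{p^{k_i}}=\langle c_i\rangle$ and letting $e_i\in P$ be the element whose $i$th coordinate is $c_i$ and whose other coordinates are trivial, the sequence $(e_1,\ldots,e_m)$ generates $P$, so $\Gamma_m(P)\ne\emptyset$ and hence $r(P)\le m$. The whole content of the lemma is therefore in the lower bound, and for that I would invoke Burnside's Basis Theorem.

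Since every $k_i$ is finite, $P$ is a finite $p$-group, so its Frattini subgroup is $\Phi(P)=P^p[P,P]$; because $P$ is abelian the commutator subgroup is trivial, so $\Phi(P)=P^p=\{x^p:x\in P\}$, which is genuinely a subgroup precisely because $P$ is abelian. Computing coordinatewise, in the factor $Z_{p^{k_i}}=\langle c_i\rangle$ the subgroup of $p$th powers is $\langle c_i^p\rangle$, which has index $p$ since $k_i\ge 1$. Hence $P/\Phi(P)\cong\prod_{i=1}^m Z_{p^{k_i}}/\langle c_i^p\rangle\cong Z_p^{\,m}$, which is an $m$-dimensional vector space over the field $\mathbb{F}_p$ with $p$ elements. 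Burnside's Basis Theorem says that a subset of $P$ generates $P$ if and only if its image in $P/\Phi(P)$ generates $P/\Phi(P)$; any generating subset of an $m$-dimensional $\mathbb{F}_p$-vector space must have at least $m$ elements (it must span). Consequently every generating sequence of $P$ has length at least $m$, i.e. $\Gamma_k(P)=\emptyset$ for $k<m$, so $r(P)\ge m$. Combining the two inequalities gives $r(P)=m$.

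There is no real obstacle here: the substantive work is packaged inside Burnside's Basis Theorem, and the remaining steps are the identification $\Phi(P)=P^p$ for abelian $p$-groups, the routine computation $P/\Phi(P)\cong Z_p^{\,m}$, and the elementary fact that an $m$-dimensional vector space needs $m$ spanning vectors. If one wished to avoid even naming $\Phi(P)$, the same argument works with $P/P^p$ in place of $P/\Phi(P)$, using directly that a generating set of $P$ maps onto a generating set of the quotient $P/P^p\cong Z_p^{\,m}$.
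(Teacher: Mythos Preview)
Your proof is correct and matches the paper's approach exactly: the paper states this lemma without proof, noting only that it is ``a consequence of Burnside's Basis Theorem,'' which is precisely the tool you invoke after the trivial upper bound and the identification $P/\Phi(P)\cong Z_p^{\,m}$.
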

\begin{proposition} Let $G$ be an n-generated finite abelian group where $k$ is the largest invariant factor of $G$. Then $H(n, G)=Z_{k}^{n}$.
\end{proposition}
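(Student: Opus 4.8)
The plan is to reduce to the case of a finite abelian $p$-group using the nilpotent-cover decomposition, and then to determine the invariant factors of the cover from the order-of-elements lemma for homogeneous groups. Assume $G \neq 1$, the trivial case being immediate, and write $G = P_1 \times \cdots \times P_t$ as the direct product of its Sylow subgroups. Each $P_i$ is a quotient of $G$, hence $n$-generated, so Proposition 7.1 gives $H(n,G) = H(n,P_1) \times \cdots \times H(n,P_t)$. Since the orders $|P_i|$ are pairwise coprime, $k = \exp(G) = \prod_i \exp(P_i)$, and $\exp(G)$ is exactly the largest invariant factor of $G$ (and likewise $\exp(P_i)$ is the largest invariant factor of $P_i$). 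So, granting the claim that $H(n,P) = Z_q^n$ for every $n$-generated abelian $p$-group $P$ with $q := \exp(P)$, the Chinese Remainder Theorem gives $H(n,G) = \prod_i Z_{\exp(P_i)}^n = \bigl(\prod_i Z_{\exp(P_i)}\bigr)^n = Z_k^n$, as desired.

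For the $p$-group claim, fix an $n$-generated abelian $p$-group $P$ and set $q = \exp(P) = p^a$. I would record three facts about $H(n,P)$: it is a $p$-group, since it is a subdirect product of copies of $P$ and so a subgroup of $P^{h_n(P)}$; it is abelian, by Proposition 4.7; and it is homogeneous of rank $n$, by Proposition 4.6. Write the invariant factor decomposition $H(n,P) \cong Z_{p^{b_1}} \times \cdots \times Z_{p^{b_s}}$ with $1 \leq b_1 \leq \cdots \leq b_s$. Burnside's Lemma (Lemma 7.3) identifies $s$ with $r\bigl(H(n,P)\bigr)$, and since $H(n,P)$ is homogeneous of rank $n$, Proposition 1.4 forces $s = n$. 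Moreover $p^{b_n} = \exp\bigl(H(n,P)\bigr) = \exp(P) = q$ by Proposition 4.7, so $b_n = a$.

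It remains to show the $b_i$ all coincide. Let $g_1, \ldots, g_n$ be the standard generators of the cyclic factors, so that $(g_1, \ldots, g_n) \in \Gamma_n\bigl(H(n,P)\bigr)$ and $|g_i| = p^{b_i}$. Each $g_i$ is an entry of a length-$n$ generating sequence of the homogeneous group $H(n,P)$, so Lemma 7.2 yields $|g_1| = \cdots = |g_n|$, that is $b_1 = \cdots = b_n = a$; hence $H(n,P) \cong Z_{p^a}^n = Z_q^n$, and the reduction above completes the proof. The main obstacle is not conceptual but organizational: one must check at each stage that the object in question is defined, which in every case comes down to the inequality $r(\cdot) \leq n$ for a suitable quotient of $G$, and one must keep careful track of the identification between ``largest invariant factor'' and ``exponent''; the essential point --- that a homogeneous abelian group of rank $n$ and exponent $q$ can only be $Z_q^n$ --- is immediate from Lemma 7.2 once the number of invariant factors is pinned to $n$.
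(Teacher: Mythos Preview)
Your proof is correct and follows essentially the same route as the paper's: reduce to $p$-groups via Proposition~7.1, use Burnside's lemma to count the cyclic factors of $H(n,P)$ as $r(H(n,P))=n$, and then apply Lemma~7.2 to the standard generating sequence to force all cyclic factors to have the same order, namely $\exp(P)$. The only cosmetic difference is that the paper parametrizes the invariant factors of $H(n,P)$ in terms of those of $P$ (writing $H(n,P)=Z_{p^{k_1}}^{j_1}\times\cdots\times Z_{p^{k_m}}^{j_m}$), whereas you work with an abstract decomposition and invoke $\exp(H(n,P))=\exp(P)$ from Proposition~4.7 directly; the substance is identical.
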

\begin{proof} From the result of Proposition 7.1, it suffices to assume that $G$ is a p-group. $G=Z_{p^{k_{1}}} \times \cdots \times Z_{p^{k_{m}}}$ for some integers $k_{1} \geq \cdots \geq k_{m}>0$. Note that $m \leq n$ since $G$ is $n$-generated. Being a subdirect product of copies of $G$, we must have $H(n, G)=Z_{p^{k_{1}}}^{j_{1}} \times \cdots \times Z_{p^{k_{m}}}^{j_{m}}$ for some integers $j_{1}, \ldots, j_{m} \geq 0$. From Lemma 7.3 and the fact that $r(H(n, G))=n, j_{1}+\cdots+j_{m}=n$.

As $G$ is a quotient of $H(n, G), H(n, G)$ must contain an element of order $p^{k_{1}}$. Note also that any generating sequence of $H(n, G)$ must contain an element of order $p^{k_{1}}$. Lemma 7.2 then implies that any element in a length $n$ generating sequence of $H(n, G)$ must be of order $p^{k_{1}}$. A length $n$ generating sequence $\left(x_{1}, \ldots, x_{n}\right)$ of $H(n, G)$ can be constructed by taking $x_{i} \in Z_{p^{k_{1}}}^{j_{1}} \times$ $\cdots \times Z_{p^{k m}}^{j_{m}}$ to have a generator in the $i t h$ coordinate and identity elements in the other coordinates. It follows that $H(n, G)=Z_{p^{k_{1}}}^{n}$ as required.
\end{proof}
\section{Simple Groups}
The computation of homogeneous covers of finite nonabelian simple groups is made especially easy by applying a result of Hall [4] and [2]. We also note that $r(S) \geq 2$ for any finite nonabelian simple group $S$.

\begin{theorem}[Hall] Let $S$ be a finite nonabelian simple group. Let $s_{1}=$ $\left(s_{11}, \ldots, s_{1 n}\right), \ldots, s_{k}=\left(s_{k 1}, \ldots, s_{k n}\right)$ be generating sequences of $S$. Then $$\left\langle\left(s_{11}, \ldots, s_{k 1}\right), \ldots,\left(t_{1 n}, \ldots, s_{k n}\right)\right\rangle=S^{k}$$ if and only if $s_{i}$ and $s_{j}$ are not equivalent under the action of $\operatorname{Aut}(S)$ for $i \neq j$.
\end{theorem}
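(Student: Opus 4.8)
The plan is to study the subgroup $H=\langle (s_{11},\dots,s_{k1}),\dots,(s_{1n},\dots,s_{kn})\rangle\le S^k$ generated by the ``columns'' of the array. Since the $i$-th coordinate projection of $H$ equals $\langle s_{i1},\dots,s_{in}\rangle=S$ (because $s_i$ is a generating sequence of $S$), the subgroup $H$ is automatically a subdirect product of $k$ copies of $S$; the entire content of the theorem is that this subdirect product is all of $S^k$ precisely when no two of the $s_i$ are $\operatorname{Aut}(S)$-equivalent. This is essentially the statement that subdirect powers of a nonabelian simple group are ``block-diagonal'' (direct products of twisted diagonals), specialized to our generating array.

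First I would handle the easy direction in its contrapositive form. Suppose $\phi(s_i)=s_j$ for some $\phi\in\operatorname{Aut}(S)$ and some $i\ne j$, i.e. $\phi(s_{il})=s_{jl}$ for all $l\le n$. Then every generator $(s_{1l},\dots,s_{kl})$ of $H$ lies in $D=\{(g_1,\dots,g_k)\in S^k:g_j=\phi(g_i)\}$, which one checks directly is a subgroup, and $D\ne S^k$ since $|S|\ge 2$ (the tuple with a nontrivial entry in position $j$ and $1$ elsewhere is not in $D$). Hence $H\le D\subsetneq S^k$, so $H=S^k$ forces the $s_i$ to be pairwise inequivalent.

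For the converse, assume the $s_i$ are pairwise inequivalent under $\operatorname{Aut}(S)$; I must show $H=S^k$. The first step is to record, using Goursat's lemma and the simplicity of $S$, that any subdirect subgroup of $S\times S$ is either all of $S\times S$ or the graph $\{(g,\psi(g)):g\in S\}$ of some $\psi\in\operatorname{Aut}(S)$. Applying this to the projection of $H$ onto coordinates $i$ and $j$: if that projection were a graph, we would get $\psi(s_i)=s_j$, contradicting inequivalence, so $H$ projects onto $S\times S$ on every pair of coordinates. The second step is an induction on $k$ showing that any subdirect $H\le S^k$ all of whose two-coordinate projections are onto must equal $S^k$. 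Dropping the last coordinate produces a subdirect $H_1\le S^{k-1}$ with the same pairwise-surjectivity property, hence $H_1=S^{k-1}$ by induction; the subgroup $N=\{g\in S:(1,\dots,1,g)\in H\}$ is normal in $S$, so $N=1$ or $N=S$, and $N=S$ immediately gives $H=S^{k-1}\times S=S^k$.

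The remaining case $N=1$ is the main obstacle. Here the projection off the last coordinate is injective on $H$, so $H$ is the graph of a surjective homomorphism $\psi:S^{k-1}\to S$. I would argue: for each coordinate subgroup $S_m\unlhd S^{k-1}$ the image $\psi(S_m)$ is normal in $S$, hence $1$ or $S$; not all can be trivial since $\psi$ is onto, so $\psi(S_{m_0})=S$ for some $m_0$; and for $m\ne m_0$ the subgroups $S_m$ and $S_{m_0}$ commute elementwise, so $\psi(S_m)$ centralizes $\psi(S_{m_0})=S$ and therefore lies in $Z(S)=1$. Thus $\psi$ factors through the $m_0$-th coordinate as an automorphism, which makes the $(m_0,k)$-projection of $H$ a proper graph — contradicting the pairwise-surjectivity hypothesis. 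Hence $N=1$ is impossible, $N=S$, and $H=S^k$. The step invoking $Z(S)=1$ is exactly where the nonabelian hypothesis on $S$ is indispensable (and where the analogous statement fails for $S=\mathbb{Z}_p$).
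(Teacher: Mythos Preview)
Your argument is correct and is the standard modern proof of Hall's theorem: the easy direction via the twisted diagonal $D$, Goursat's lemma to reduce subdirect subgroups of $S\times S$ to either the full product or an automorphism graph, and then the induction on $k$ with the dichotomy $N=1$ or $N=S$, ruling out $N=1$ by showing the resulting surjection $S^{k-1}\to S$ must factor through a single coordinate (using $Z(S)=1$). Each step is sound, and you correctly identify where the nonabelian hypothesis enters.

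There is, however, nothing to compare against: the paper does not prove this theorem. It is stated as Theorem~8.1 with attribution to Hall and citations [4] and [5] (and Collins' notes [2]), and is then immediately applied to deduce $H(n,S)=S^{h_n(S)}$. So your write-up is not a variant of the paper's proof but a self-contained proof of a result the paper simply quotes. If you intend to include it, it would slot in as a proof of a cited classical fact rather than as an alternative to anything the author wrote.
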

\begin{proposition} Let $S$ be a finite nonabelian simple group. For $n \geq$ $2, H(n, S)=S^{h_{n}(S)}$.
\end{proposition}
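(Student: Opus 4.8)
The plan is to identify $H(n,S)$ with a concretely generated subgroup of $S^{h_n(S)}$ and then invoke Hall's theorem essentially verbatim. First I would record that every finite nonabelian simple group is $2$-generated, so $r(S)=2\le n$; hence $\Gamma_n(S)\neq\emptyset$, $h_n(S)\ge 1$, and $H(n,S)$ is defined. Following Section 3, fix representatives $s_1,\ldots,s_{h_n(S)}$ of the orbits of $\operatorname{Aut}(S)$ on $\Gamma_n(S)$, write $s_i=(s_{i1},\ldots,s_{in})$, and recall that $H(n,S)=F_n/K$ with $K=\bigcap_{i}K_{s_i}$ the kernel of the homomorphism $F_n\to S^{h_n(S)}$ sending $a_j\mapsto(\pi_{s_1}(a_j),\ldots,\pi_{s_{h_n(S)}}(a_j))$. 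Consequently $H(n,S)$ is (isomorphic to) the subgroup of $S^{h_n(S)}$ generated by the sequence
\[
\bigl((s_{11},\ldots,s_{h_n(S)\,1}),\ \ldots,\ (s_{1n},\ldots,s_{h_n(S)\,n})\bigr).
\]

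Next I would observe that, being representatives of distinct orbits, the generating sequences $s_1,\ldots,s_{h_n(S)}$ are pairwise inequivalent under the action of $\operatorname{Aut}(S)$. This is exactly the hypothesis needed for the nontrivial implication of Hall's theorem, which then asserts that the coordinatewise subgroup generated by these sequences is all of $S^{h_n(S)}$. Therefore $H(n,S)=S^{h_n(S)}$.

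I do not expect a serious obstacle: once the subdirect-product description of $H(n,S)$ from Section 3 is in hand, the proposition is a direct translation of Hall's theorem. The only points requiring care are (i) checking that $H(n,S)$ really is the subgroup generated by the displayed sequence, which is immediate from the definition of $K$, and (ii) matching the hypothesis of Hall's theorem (pairwise $\operatorname{Aut}(S)$-inequivalence) with the defining property of a complete set of orbit representatives. As a sanity check one can note that the converse direction of Hall's theorem, together with this result, recovers the fact that $h_n(S)$ is the maximal number of copies of $S$ admitting an $n$-generated subdirect product, consistent with $H(n,S)$ being the largest such subdirect product (Proposition 4.1).
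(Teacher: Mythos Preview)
Your argument is correct and is precisely the approach the paper takes; the paper's one-line proof (``look at how we defined $H(n,G)$'') is just a terse pointer to the construction in Section~3 together with Hall's theorem, which you have unpacked explicitly. The only extra ingredient you add is the remark that $r(S)=2$ (so that $H(n,S)$ is defined for all $n\ge 2$); the paper leaves this implicit via the hypothesis $n\ge 2$.
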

\begin{proof} The result follows by looking at how we defined $H(n, G)$ at the beginning of the second section.
\end{proof}
\section{Homogeneous Covers of $p q$ Groups}
To conclude this thesis, we compute $H(n, G)$ for a nonabelian $p q$ group $G$ for primes $p$ and $q$ such that $p \mid q-1$. The techniques used in this computation are due to Steve Chase [1]. Some of the results and techniques from previous sections were found during the course of determining $H(n, G)$. Our strategy is somewhat similar to how we computed homogeneous covers of nilpotent groups and abelian groups. We show that $H(n, G)$ has a certain form as a semidirect product as a result of $G$ being a $p q$ group. We then study $n$ generated subdirect products of copies of $G$, which are quotients of $H(n, G)$ by the results of section 2 , to completely determine $H(n, G)$. For the rest of this section, we assume $n \geq 2$ and that $p$ and $q$ are primes such that $p \mid q-1$.

\begin{lemma} Every irreducible $\mathbb{F}_{q}$-representation of $Z_{p}$ is 1-dimensional.
\end{lemma}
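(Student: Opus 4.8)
The plan is to identify the group algebra $\mathbb{F}_q[Z_p]$ explicitly and show it is a product of copies of $\mathbb{F}_q$, from which one-dimensionality of all simple modules is immediate. First I would write $Z_p = \langle g \rangle$ and use the standard isomorphism $\mathbb{F}_q[Z_p] \cong \mathbb{F}_q[x]/(x^p - 1)$ sending $g \mapsto x$; irreducible $\mathbb{F}_q$-representations of $Z_p$ are the same as simple modules over this algebra.

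The crucial input is the hypothesis $p \mid q - 1$. Since $p \mid q-1$ we have $p \leq q - 1 < q$, so $p \neq q$, and therefore $x^p - 1$ is coprime to its derivative $p x^{p-1}$ and is separable over $\mathbb{F}_q$. Moreover $\mathbb{F}_q^\times$ is cyclic of order $q-1$, so it contains an element $\zeta$ of order exactly $p$; hence $1, \zeta, \ldots, \zeta^{p-1}$ are $p$ distinct elements of $\mathbb{F}_q$, all roots of $x^p - 1$, and consequently $x^p - 1 = \prod_{i=0}^{p-1}(x - \zeta^i)$ splits into distinct linear factors already over $\mathbb{F}_q$.

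By the Chinese Remainder Theorem, $\mathbb{F}_q[x]/(x^p - 1) \cong \prod_{i=0}^{p-1} \mathbb{F}_q[x]/(x - \zeta^i) \cong \mathbb{F}_q^{\,p}$ as $\mathbb{F}_q$-algebras. The simple modules over $\mathbb{F}_q^{\,p}$ are precisely the $p$ coordinate projections, each one-dimensional over $\mathbb{F}_q$; translating back, the irreducible $\mathbb{F}_q$-representations of $Z_p$ are exactly the $p$ one-dimensional representations on which $g$ acts as multiplication by $\zeta^i$, $0 \leq i \leq p-1$. This proves the lemma.

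I do not expect a real obstacle here; the only point requiring care is invoking $p \mid q - 1$ in the right place — it is used both to force $p \neq q$ (so that $\mathbb{F}_q[Z_p]$ is semisimple, equivalently $x^p-1$ is separable) and, more essentially, to guarantee that the $p$-th roots of unity lie in $\mathbb{F}_q$ itself rather than in a proper extension. An alternative route is Maschke's theorem together with Schur's lemma: $\mathbb{F}_q[Z_p]$ is semisimple, and since $Z_p$ is abelian each simple module $S$ has $\operatorname{End}_{\mathbb{F}_q[Z_p]}(S)$ equal to a finite field extension of $\mathbb{F}_q$; one then checks this extension is $\mathbb{F}_q$ exactly because $\mathbb{F}_q$ contains the $p$-th roots of unity, giving $\dim_{\mathbb{F}_q} S = 1$. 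I would present the Chinese Remainder Theorem argument, as it is the most self-contained and also exhibits the representations concretely.
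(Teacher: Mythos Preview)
Your proof is correct and follows essentially the same route as the paper: identify $\mathbb{F}_q[Z_p]\cong\mathbb{F}_q[x]/(x^p-1)$, use $p\mid q-1$ to see that $x^p-1$ is separable and splits completely over $\mathbb{F}_q$, apply the Chinese Remainder Theorem to get $\mathbb{F}_q^{\,p}$, and read off that all simple modules are one-dimensional. Your write-up is in fact more detailed than the paper's, which compresses the separability and splitting steps into a single sentence.
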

\begin{proof} Observe that $\mathbb{F}_{q} Z_{p} \cong \mathbb{F}_{q}[x] /\left(x^{p}-1\right)$ as $\mathbb{F}_{q^{-}}$algebras. $x^{p}-1 \in \mathbb{F}_{q}[x]$ is separable and $\mathbb{F}_{q}$ contains all of the $p t h$ roots since $p \mid q-1$. The Chinese Remainder Theorem then implies that $\mathbb{F}_{q}[x] /\left(x^{p}-1\right) \cong \mathbb{F}_{q}^{p}$ as rings. The result then follows from basic representation theory.
\end{proof}
\begin{lemma} Suppose $G=V \rtimes Z_{p}$ where $V$ is an elementary abelian $q$ group such that the trivial $\mathbb{F}_{q}$-representation of $Z_{p}$ does not appear in the decomposition of $V$ as a sum of irreducible $\mathbb{F}_{q}$-modules with respect to the action of $Z_{p}$ on $V$ in $G$. We write $V=\bigoplus \mathbb{F}_{q}^{+}$additively where $\mathbb{F}_{q}^{+}=\mathbb{F}_{q}$ viewing $\mathbb{F}_{q}$ as an abelian group with respect to addition. Further assume that only one isomorphism class of irreducible $\mathbb{F}_{q}$-representations occurs in the $\mathbb{F}_{q} Z_{p}$ decomposition of $V$. Then $G$ can be generated by $n$ elements if and only if $V$ can be generated by $n-1$ elements.
\end{lemma}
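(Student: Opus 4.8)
The plan is to first rewrite the hypotheses as a concrete normal form for $G$, then settle the easy implication, and reduce the hard implication to a subgroup-order count carried out after one Nielsen transformation. Fix a generator $t$ of $Z_p$ and put $k = \dim_{\mathbb{F}_q} V$, so that $r(V) = k$ by Burnside's basis theorem (Lemma 7.3) and ``$V$ can be generated by $n-1$ elements'' means exactly $k \le n-1$. By Lemma 9.1 every irreducible $\mathbb{F}_q Z_p$-module is $1$-dimensional, and by hypothesis only one isomorphism class $W$ of such modules appears in $V$; hence $V \cong W^{\oplus k}$ as $\mathbb{F}_q Z_p$-modules, so after identifying $V$ with $\mathbb{F}_q^k$ the element $t$ acts on $V$ as multiplication by a single scalar $\lambda \in \mathbb{F}_q^\times$. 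Since the trivial module does not occur, $\lambda \ne 1$, and since $t^p = 1$ we get $\lambda^p = 1$; thus $\lambda$ has multiplicative order exactly $p$. The structural point I will use repeatedly is that, because $t$ acts by a scalar, \emph{every} $\mathbb{F}_q$-subspace of $V$ is invariant under $\langle t\rangle$ --- this is precisely where the ``single isomorphism class'' hypothesis enters, and without it the lemma is false.

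For the implication ``$k \le n-1 \Rightarrow r(G) \le n$'': choose a spanning set $v_1, \dots, v_{n-1}$ of $V$ (possible since $k \le n-1$). The elements $(v_1, 1), \dots, (v_{n-1}, 1)$ generate the abelian subgroup $V \times \{1\}$, and adjoining $(0, t)$, whose image generates $G/V \cong Z_p$, yields all of $G$; so $(v_1, 1), \dots, (v_{n-1}, 1), (0, t)$ is a generating sequence of length $n$.

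For the converse ``$r(G) \le n \Rightarrow k \le n-1$'': write $G = \langle g_1, \dots, g_n \rangle$ with $g_i = (w_i, t^{a_i})$. The images of the $g_i$ generate $G/V \cong Z_p$, so some $a_i \not\equiv 0 \pmod p$; reorder so that $a_n \not\equiv 0$. As $p$ is prime, $a_n$ is invertible modulo $p$, so for each $i < n$ I may pick $c_i$ with $a_i + c_i a_n \equiv 0 \pmod p$ and replace $g_i$ by $g_i' := g_i g_n^{c_i}$; this Nielsen transformation does not change the generated subgroup, so $\langle g_1', \dots, g_{n-1}', g_n \rangle = G$ with $g_1', \dots, g_{n-1}' \in V$. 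Now let $U := \langle g_1', \dots, g_{n-1}' \rangle$, an $\mathbb{F}_q$-subspace of $V$ of dimension $\le n-1$, so $|U| \le q^{n-1}$. By the structural point above $g_n$ normalizes $U$, hence $\langle U, g_n \rangle = U\langle g_n \rangle$ and $|\langle U, g_n \rangle| = |U| \cdot |\langle g_n \rangle| / |U \cap \langle g_n \rangle|$. A one-line computation in the semidirect product gives $g_n^p = \bigl( (1 + \mu + \cdots + \mu^{p-1}) w_n,\, 1 \bigr)$ where $\mu := \lambda^{a_n}$ has order $p$, so the geometric sum vanishes and $g_n^p = (0,1)$; thus $\langle g_n \rangle$ has order $p$, and $U \cap \langle g_n \rangle$ is trivial because a nontrivial power of $g_n$ has nontrivial image in $G/V$. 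Therefore $|G| = |\langle U, g_n \rangle| = |U| \cdot p \le q^{n-1} p$, while $|G| = |V| \cdot p = q^k p$; hence $k \le n-1$, as needed.

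The only step that is not purely formal is the passage just described: clearing the $Z_p$-components of all but one generator via a Nielsen transformation, and then observing that the scalar action of $t$ automatically makes the span $U$ into a $g_n$-invariant subgroup, which caps $|\langle g_1', \dots, g_{n-1}', g_n \rangle|$ at $q^{n-1} p$. This is the crux of the argument, and also the precise place where both hypotheses --- $p$ prime and a single irreducible isomorphism class occurring in $V$ --- are genuinely used.
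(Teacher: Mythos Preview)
Your proof is correct and follows essentially the same strategy as the paper: use Nielsen-type moves to arrange that $n-1$ of the generators lie in $V$, then invoke the scalar action of $Z_p$ (the ``single isomorphism class'' hypothesis) to see that the subgroup they generate is already $g_n$-invariant, forcing it to be all of $V$. The only cosmetic difference is that the paper first conjugates by a suitable $(u,1)$ to put the distinguished generator into the clean form $(0,w_1)$ before clearing the others, whereas you skip that step and instead verify $g_n^p = 1$ directly via the vanishing geometric sum $1+\mu+\cdots+\mu^{p-1}$ --- a slightly more streamlined route to the same order count.
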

\begin{proof} Viewing elements of $G$ as ordered pairs of elements from $V$ and $Z_{p}$, it is clear that $G$ can be generated by $n$ elements if $V$ can be generated by $n-1$ elements. Conversely, suppose $G$ can be generated by $n$ elements.
$G=\left\langle g_{1}, \ldots, g_{n}\right\rangle$ for some $g_{i}=\left(v_{i}, w_{i}\right)$ where $v_{i} \in V$ and $w_{i} \in Z_{p}$ for $i \leq n$. We may assume that $\left\langle w_{1}\right\rangle=Z_{p}$ after reordering the $g_{i}$ if necessary. As the trivial $\mathbb{F}_{q}$-representation of $Z_{p}$ does not appear in the $\mathbb{F}_{q} Z_{p}$ decomposition of $V, w_{1}(v)=v$ for $v \in V$ if and only if $v=0$. Hence, $\left(i d-w_{1}\right) V=V$ where $i d: V \rightarrow V$ is the identity map. Then $v_{1}=u-w_{1}(u)$ for some $u \in V$. We then have

$$
(u, 1)\left(0, w_{1}\right)(u, 1)^{-1}=\left(v_{1}, w_{1}\right)
$$

As $(u, 1)^{-1} G(u, 1)=G,\left(h_{1}, \ldots, h_{n}\right)$ is a generating sequence of $G$ where $h_{1}=\left(0, w_{1}\right)$ and $h_{i}=(u, 1)^{-1} g_{2}(u, 1)$ for $2 \leq i \leq n$. For any integers $l_{2}, \ldots, l_{n}, G=\left\langle h_{1}, \ldots, h_{n}\right\rangle=G=\left\langle h_{1}, h_{1}^{l_{2}} h_{2}, \ldots, h_{1}^{l_{n}} h_{n}\right\rangle$. As $\left\langle w_{1}\right\rangle=$ $Z_{p}$, it follows that there exists a generating sequence of $G$ of the form $\left(\left(0, w_{1}\right),\left(k_{2}, 1\right), \ldots,\left(k_{n}, 1\right)\right)$.

Finally, we prove that as a group $V$ can be generated by $n-1$ elements. From our assumption on the decomposition of $V$ as an $\mathbb{F}_{q} Z_{p}$-module, $Z_{p}$ acts on $V$ by the multiplication of scalars. Hence, $\langle v\rangle \leq V$ is $Z_{p}$-invariant for all $v \in V$. It follows that $V$ can be generated by $n-1$ elements.
\end{proof}
We now have the tools needed to compute $H(n, G)$.

\begin{proposition} $H(n, G)=V \rtimes Z_{p}^{n}$ where $V=\bigoplus_{i=1}^{(n-1)\left(p^{n}-1\right)} \mathbb{F}_{q}^{+}$. The decomposition of $V$ as a $\mathbb{F}_{q} Z_{p}^{n}$-module consists of the $p^{n}-1$ nontrivial 1-dimensional $\mathbb{F}_{q}$-representations of $Z_{p}^{n}$, and each representation occurs $n-1$ times.
\end{proposition}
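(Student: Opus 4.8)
The plan is to combine the semidirect-product form forced by $G$ being a $pq$ group with the description of $H(n,G)$ as the largest $n$-generated subdirect product of copies of $G$. First I would apply Proposition 6.1 with $N = Z_q$ and $K = Z_p$, which have coprime orders: since $G = Z_q \rtimes Z_p$, this gives $H(n,G) = N_0 \rtimes H(n,Z_p)$ for some $N_0 \le Z_q^{h_n(G)}$, and by Proposition 7.4 (the abelian case, with $Z_p$ having largest invariant factor $p$) we have $H(n,Z_p) = Z_p^n$. So $H(n,G) = N_0 \rtimes Z_p^n$, where $N_0$ is an elementary abelian $q$-group, i.e.\ an $\mathbb{F}_q Z_p^n$-module. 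Since $q \nmid p^n$ this module is semisimple, and since $p \mid q-1$ the polynomial $x^p-1$ splits over $\mathbb{F}_q$, so the argument of Lemma 9.1 (via $\mathbb{F}_q Z_p^n \cong (\mathbb{F}_q Z_p)^{\otimes n} \cong \mathbb{F}_q^{\,p^n}$) shows that every irreducible $\mathbb{F}_q Z_p^n$-module is one of the $p^n$ one-dimensional characters. Hence $N_0 \cong \bigoplus_\chi (\mathbb{F}_q)_\chi^{\,m_\chi}$. The trivial character does not occur, for its isotypic component lies in the center of $N_0 \rtimes Z_p^n$, whereas $H(n,G)$, being a subdirect product of copies of the centerless group $G$ (a nonabelian group of order $pq$ has trivial center, and a subdirect product of centerless groups is centerless), is centerless.

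Next I would prove the upper bound $m_\chi \le n-1$ for every nontrivial $\chi$. Quotienting out all the other isotypic components (a normal subgroup, being a sum of $Z_p^n$-submodules of the abelian $N_0$) yields $H_\chi := (\mathbb{F}_q)_\chi^{\,m_\chi} \rtimes Z_p^n$, which is $n$-generated since $H(n,G)$ is. Writing $Z_p^n = \ker\chi \times C$ with $C \cong Z_p$, the subgroup $\ker\chi$ acts trivially and $C$ acts faithfully (by a primitive $p$-th root of unity in $\mathbb{F}_q$), so $H_\chi \cong \ker\chi \times \big((\mathbb{F}_q)^{\,m_\chi} \rtimes Z_p\big)$; projecting off $\ker\chi$ shows the second factor is $n$-generated, and Lemma 9.2 (its hypotheses hold: no trivial subrepresentation, and only the one isomorphism type) says that this factor is $j$-generated exactly when $Z_q^{\,m_\chi}$ is $(j-1)$-generated, i.e.\ when $j \ge m_\chi + 1$. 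Therefore $m_\chi + 1 \le n$.

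For the matching lower bound I would set $V = \bigoplus_{\chi \ne 1} (\mathbb{F}_q)_\chi^{\,n-1}$ and prove that $V \rtimes Z_p^n$ is both $n$-generated and a subdirect product of copies of $G$; Proposition 4.1 then makes $V \rtimes Z_p^n$ a quotient of $H(n,G)$, so $q^{\sum_{\chi\ne1} m_\chi} p^n = |H(n,G)| \ge |V \rtimes Z_p^n| = q^{(n-1)(p^n-1)} p^n$, which together with the upper bound forces $m_\chi = n-1$ for all nontrivial $\chi$, and then the module isomorphism $N_0 \cong V$ gives $H(n,G) = N_0 \rtimes Z_p^n \cong V \rtimes Z_p^n$ with the stated decomposition. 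Showing $V \rtimes Z_p^n$ is a subdirect product of copies of $G$ is routine: for each nontrivial $\chi$ and each $1 \le l \le n-1$, the surjection $\chi \colon Z_p^n \twoheadrightarrow \mu_p(\mathbb{F}_q) \cong Z_p$ together with the map ``project $V$ onto its $\chi$-isotype and take the $l$-th coordinate in $\mathbb{F}_q = Z_q$'' are compatible and assemble into a surjection $V \rtimes Z_p^n \to G$, and the intersection of the kernels of these surjections is trivial because characters separate $Z_p^n$ and coordinate functionals separate each isotype.

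The heart of the argument is that $V \rtimes Z_p^n$ is $n$-generated; Lemma 9.2 does not suffice here, since it only handles a single isotype under $Z_p$. I would take generators $g_i = (v_i, e_i)$ with $e_1, \dots, e_n$ a basis of $Z_p^n$ and the $v_i \in V$ to be chosen. Then $\langle g_1, \dots, g_n\rangle$ surjects onto $Z_p^n$, so its intersection with $V$ is the $Z_p^n$-submodule of $V$ generated by the images of the defining relators of $Z_p^n$, namely the commutators $[g_i, g_j] = \big((1-e_j)v_i - (1-e_i)v_j,\ 1\big)$ and the powers $g_i^{\,p} = \big((1 + e_i + \dots + e_i^{\,p-1})v_i,\ 1\big)$. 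Passing to the $\chi$-isotype and writing $a_i = (v_i)_\chi$, $\epsilon_i = \chi(e_i)$, and $S = \{i : \epsilon_i = 1\}$ (a proper subset of $\{1,\dots,n\}$ since $\chi \ne 1$), the $g_i^{\,p}$ contribute $p\,a_i$ (with $p \ne 0$ in $\mathbb{F}_q$) for $i \in S$ and nothing otherwise, so the $\chi$-component of $\langle g_1,\dots,g_n\rangle \cap V$ is spanned by $\{a_i : i \in S\}$ together with $\{(1-\epsilon_j)a_i - (1-\epsilon_i)a_j\}$; after rescaling $b_i = (1-\epsilon_i)^{-1} a_i$ for $i \notin S$ the latter set becomes $\{b_i - b_j : i, j \notin S\}$, so this space is the span of a set of exactly $n-1$ vectors. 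One then checks directly that, for each $\chi$ separately, the $a_i$ (whose values for distinct $\chi$ are independent coordinates of the $v_i$) can be chosen to make those $n-1$ vectors a basis of $(\mathbb{F}_q)_\chi^{\,n-1}$; doing this for all $\chi$ at once gives $\langle g_1,\dots,g_n\rangle \cap V = V$, hence $\langle g_1,\dots,g_n\rangle = V \rtimes Z_p^n$. I expect this verification — picking the right generators and checking, isotype by isotype, that the commutator relators cut out the whole module — to be the main obstacle.
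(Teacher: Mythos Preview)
Your argument is correct, and the setup (Propositions 6.1 and 7.4, the center argument excluding the trivial character, and the quotient-plus-Lemma-9.2 upper bound $m_\chi\le n-1$) matches the paper exactly. The genuine divergence is in establishing $m_\chi\ge n-1$. The paper does \emph{not} build the full group $V\rtimes Z_p^{\,n}$ directly; instead it first invokes the homogeneity of $H(n,G)$ (via Proposition 5.1) to produce, for any two nontrivial characters $f_1,f_2$, an automorphism $h$ of $H(n,G)$ carrying the $f_1$-isotype to the $f_2$-isotype, forcing all $m_\chi$ to be equal to a common value $m$. It then exhibits the much smaller group $\bigl(\bigoplus_{1}^{\,n-1}\mathbb{F}_q\bigr)\rtimes Z_p$ as an $n$-generated subdirect product inside $G^{\,n-1}$ (generated by the diagonal $p$-element together with the $n-1$ coordinate $q$-elements), observes that the quotient map from $H(n,G)$ to this group splits, and reads off $m\ge n-1$. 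Your route bypasses the homogeneity/automorphism step entirely and instead proves $n$-generation of the full $V\rtimes Z_p^{\,n}$ by the relator computation in each isotype; this is more laborious but more self-contained and explicitly constructive (it actually produces a generating $n$-tuple of $H(n,G)$), whereas the paper's approach is shorter precisely because it leverages the transitivity of $\mathrm{Aut}(H(n,G))$ already established in Proposition 4.6.
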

\begin{proof} From Propositions 6.1 and 7.4, $H(n, G)=V \rtimes Z_{p}^{n}$ where $V$ is an elementary abelian $q$-group. Note that $H(n, G) \leq G^{h_{n}(G)}$. The Sylow $q$ subgroup $\bigoplus_{i=1}^{h_{n}(G)} \mathbb{F}_{q}^{+}$of $G^{h_{n}(G)}$ decomposes as a sum of nontrivial 1-dimensional $\mathbb{F}_{q} Z_{p}^{h_{n}(G)}$-submodules. Note that the decomposition of $V$ into $\mathbb{F}_{q} Z_{p}^{n}$-submodules contains the trivial $\mathbb{F}_{q}$-representation of $Z_{p}^{n}$ if and only if there exists a nonzero $x \in V$ in the center of $H(n, G)$, which is impossible by the remark of the previous sentence and the fact that $H(n, G) \leq G^{h_{n}(G)}$ is a subdirect product. Hence, $V$ decomposes into nontrivial $\mathbb{F}_{q} Z_{p}^{n}$-submodules.

$V$ must have at least one irreducible $\mathbb{F}_{q} Z_{p}^{n}$-submodule $V_{0} . V_{0}$ corresponds to a nontrivial homomorphism $f_{1}: Z_{p}^{n} \rightarrow \mathbb{F}_{q}^{*}$. Let $f_{2}: Z_{p}^{n} \rightarrow \mathbb{F}_{q}^{*}$ be another nontrivial homomorphism. Note that $f_{1}$ and $f_{2}$ map onto the same cyclic subgroup of $\mathbb{F}_{q}^{*}$. Let $\left(g_{1}, \ldots, g_{n}\right)$ be a generating sequence of $Z_{p}^{n}$. Then $\left(f_{2}\left(g_{1}\right), \ldots, f_{2}\left(g_{n}\right)\right)$ is a generating sequence of $f_{2}\left(Z_{p}^{n}\right) \leq \mathbb{F}_{q}^{*}$. From Proposition 5.1 and its proof, there exists an automorphism $h: H(n, G) \rightarrow H(n, G)$ such that $f_{1}=f_{2} \circ h$.

For $v \in V$ and $z \in Z_{p}^{n}$ we let $z v$ denote $v$ multiplied by the scalar corresponding to $z$ from the homomorphism $Z_{p}^{n} \rightarrow \operatorname{Aut}(V)$. Now $h\left(V_{0}\right)$ is an irreducible $\mathbb{F}_{q} Z_{p}^{n}$-submodule of $V$. Let $v \in V_{0}$ be nonzero. For $w \in Z_{p}^{n}$, wh(v)= $h\left(h^{-1}(w h(v))\right)=h\left(h^{-1}(w) v\right)=h\left(f_{1}\left(h^{-1}(w)\right) \cdot v\right)=h\left(f_{2}(w) \cdot v\right)=f_{2}(w) \cdot h(v)$ where the last equality follows from the fact that scalar multiplication in $V$ commutes with the automorphism $h$. Hence, $f_{2}$ is the homomorphism corresponding to the irreducible submodule $h\left(V_{0}\right)$. We have also shown that $h$ maps any irreducible submodule corresponding to $f_{1}$ to a submodule corresponding to $f_{2}$. Hence, irreducible submodules of $V$ corresponding to $f_{1}$ and $f_{2}$ occur with the same multiplicity $m$.

We have thus shown that the decomposition of $V$ into irreducible $\mathbb{F}_{q} Z_{p}^{n}$ submodules contains $p^{n}-1$ distinct irreducible submodules each occurring with the same multiplicity $m$ for some $m>0$. Let $V_{0}$ be an irreducible $\mathbb{F}_{q} Z_{p}^{n}$ submodule as in the previous paragraphs with corresponding homomorphism $f_{1}: Z_{p}^{n} \rightarrow \mathbb{F}_{q}^{*}$. Factoring out the other irreducible representations of $Z_{p}^{n}$, we see that $\bigoplus_{i=1}^{m} V_{0} \rtimes Z_{p}^{n}$ is a quotient of $H(n, G)$. Note that the kernel of $f_{1}$ is of order $p^{n-1}$. Hence, by factoring out the kernel of $f_{1}$, we see that $\bigoplus_{i=1}^{m} V_{0} \rtimes Z_{p}$ is a quotient of $H(n, G)$ that satisfies the conditions of Lemma 9.2. As $H(n, G)$ is $n$-generated, it follows that $\bigoplus_{i=1}^{m} V_{0}$ is $n$-1-generated. Hence, $m \leq n-1$.

We must now show that $m=n-1$. Note that $H=\bigoplus_{i=1}^{n-1} V_{0} \rtimes Z_{p}$, where the action of $Z_{p}$ on $V_{0}$ is defined by $f_{1}$, is a $n$-generated subdirect product of $n-1$ copies of $G$. To see this, let $x$ be a generator of the Sylow $q$-subgroup of $G$ and let $y$ be a generator of the Sylow $p$-subgroup of $G$. Then $H \leq G^{n-1}$ is the group generated by $(y, y, \ldots, y) \in G^{n-1}$ and the $n-1$ elements $(x, 1, \ldots, 1),(1, x, \ldots, 1), \ldots,(1,1, \ldots, x) \in G^{n-1}$ where $1 \in G$ is the identity element. From Proposition 4.1, $H$ is a quotient of $H(n, G)$. Hence, $H$ is a subgroup of $H(n, G)$ since the quotient map $H(n, G) \rightarrow H$ splits. Thus, $m=n-1$ as required.
\end{proof}
\section*{Acknowledgements}
I would like to thank Keith Dennis and Steve Chase for being sources and inspiration for the work in this thesis. I would also like to thank Dan Collins for writing excellent notes on group theory.

\section*{References}

\, \, \, [1] Chase, Steve. Homogeneous Covers of pq-Groups, 2007.

[2] Collins, Dan. Generating Sequences of Finite Groups, 2013.

[3] The GAP Group, GAP - Groups, Algorithms, and Programming, Version 4.7.4; 2014, (\href{http://www.gap-system.org}{http://www.gap-system.org}).

[4] Gaschütz, Wolfgang. Zu einem von B. H. und H. Neumann gestellten Problem, Math. Nachr, 14 (1955), 249-252

[5] Hall, Philip. The Eulerian Functions of a Finite Group, Quarterly Journal of Mathematics, 7 (1936), 134-151

\end{document}